\newtheorem{lemma}{Lemma}[section]
\newtheorem{theorem}[lemma]{Theorem}
\newcommand*\smallbullet{\mathpalette\smallbullet@{0.7}}
\newcommand*\smallbullet@[2]{\mathbin{\vcenter{\hbox{\scalebox{#2}{$\m@th#1\bullet$}}}}}
\newcommand{\sub}[2]{\mathrm{Sub}(#1, #2)}
\newcommand{\inds}[1]{\mathcal{I}_{#1}}
\newcommand{\set}[1]{\llbracket #1\rrbracket}
\DeclareMathOperator{\Span}{Span}
\DeclareMathOperator{\Aff}{Aff}
\newcommand{\tp}{{\sf T}}
\DeclareMathOperator{\trace}{tr}
\newcommand{\mc}[1]{\mathcal{#1}}
\newcommand{\ortho}{\mathrm{O}}         
\newcommand{\stab}[1]{{\rm Stab}(#1)}  
\newcommand{\smallpmatrix}[1]{\left(\begin{smallmatrix}#1\end{smallmatrix}\right)}  
\newcommand{\1}{{\bf 1}}
\DeclareMathOperator{\rank}{rank}
\newcommand{\R}{\mathbb{R}}
\newcommand{\Z}{\mathbb{Z}}
\newcommand{\floor}[1]{\left\lfloor #1\right\rfloor}
\newcommand{\kpbn}{\Delta}
\newcommand{\lassn}{\Lambda} 
\newcommand{\MS}[1]{{\rm MS}(#1)}
\newcommand{\spindle}[2]{{\rm S}(#1, #2)}
\newcommand{\comp}[1]{{\rm K}_{#1}}
\newenvironment{optprob}
{
  \arraycolsep=0pt
  \begin{array}{r@{\ }l@{\quad}l}
}%
{
  \end{array}
}
\newcommand{\onerow}[1]{\multicolumn{2}{l}{#1}}
\newcolumntype{C}{>{{}}c<{{}}} 
\newlength\claimlen%
\newcommand{\assert}[1]{%
  \begin{minipage}{\claimlen}
    #1
  \end{minipage}%
}
\newcommand{\defi}[1]{\textit{#1}}
 \newcommand*{\centerfloat}{%
   \parindent\z@%
   \leftskip\z@\@plus 1fil\@minus\textwidth%
   \rightskip\leftskip%
   \parfillskip\z@skip}
\title{Obtuse almost-equiangular sets}
\author{Christine Bachoc}
\address{C. Bachoc, Institut de Mathématiques de Bordeaux, UMR 5251, université de Bordeaux, 351 cours de la Libération, 33400 Talence, France.}
\email{Christine.Bachoc@math.u-bordeaux.fr}
\author{Bram Bekker}
\address{A.J.F. Bekker, Delft Institute of Applied Mathematics,
  Delft University of Technology, Mekelweg~4, 2628~CD Delft, The
  Netherlands.}
\email{B.Bekker@tudelft.nl}
\author{Philippe Moustrou}
\address{P. Moustrou, Institut de Mathématiques de Toulouse, Université Toulouse Jean Jaurès, 5 Allée Antonio Machado, 31058 Toulouse, France.}
\email{philippe.moustrou@math.univ-toulouse.fr}
\author{Fernando Mário de Oliveira Filho}
\address{F.M. de Oliveira Filho, Delft Institute of Applied Mathematics,
  Delft University of Technology, Mekelweg~4, 2628~CD Delft, The
  Netherlands.}
\email{F.M.deOliveiraFilho@tudelft.nl}
\thanks{The second author is supported by the grant OCENW.KLEIN.024 of the Dutch
  Research Council (NWO)}
\subjclass[2020]{52C10, 51K99, 90C22}
\date{April 15, 2025}
\begin{document}
\begin{abstract}
For~$t \in [-1, 1)$, a set of points on the $(n-1)$-dimensional unit sphere is called \defi{$t$-almost equiangular} if among any three distinct points there is a pair with inner product~$t$.  We propose a semidefinite programming upper bound for the maximum cardinality~$\alpha(n, t)$ of such a set based on an extension of the Lovász theta number to hypergraphs.  This bound is at least as good as previously known bounds and for many values of~$n$ and~$t$ it is better.

We also refine existing spectral methods to show that~$\alpha(n, t) \leq 2(n+1)$ for all~$n$ and~$t \leq 0$, with equality only at~$t = -1/n$.  This allows us to show the uniqueness of the optimal construction at~$t = -1/n$ for~$n \leq 5$ and to enumerate all possible constructions for~$n \leq 3$ and~$t \leq 0$.
\end{abstract}

\maketitle
\markboth{C. Bachoc, A.J.F. Bekker, P. Moustrou and F.M. de Oliveira Filho}{Obtuse almost-equiangular sets}

\setcounter{tocdepth}{1}
\tableofcontents


\section{Introduction}%
\label{sec:intro}

For~$x$, $y \in \R^n$, denote by~$x\cdot y$ the Euclidean inner product between~$x$ and~$y$.  For integer~$n \geq 2$, let~$S^{n-1} = \{\, x \in \R^n : \|x\|=1\,\}$ be the $(n-1)$-dimensional unit sphere.  
Given an inner product~$t \in [-1, 1)$, a set~$S \subseteq S^{n-1}$ is \defi{$t$-almost-equiangular} if every $3$-subset~$\{x, y, z\}$ of~$S$ is such that~$t \in \{x\cdot y, x\cdot z, y\cdot z\}$.  In the literature, the word ``almost'' is often replaced by ``nearly''.  An \defi{obtuse almost-equiangular set} is a $t$-almost equiangular set with~$t \leq 0$.  A $0$-almost-equiangular set is also often called \defi{almost-orthogonal}.  Similarly, one may define \defi{almost-equidistant} subsets of a metric space, of which almost-equiangular sets are a special case.

Denote the maximum cardinality of a $t$-almost-equiangular set in~$S^{n-1}$ by~$\alpha(n, t)$.  The problem of determining~$\alpha(n, t)$ is called the \defi{$t$-almost-equiangular-set problem}.  For~$t = 0$, this problem  first appears in a paper by Rosenfeld~\cite{Rosenfeld1991AlmostEd}, who attributes the question to Erdős. He showed that~$\alpha(n, 0) = 2n$; a lower bound is given by the union of two disjoint orthogonal bases and an upper bound is given through an interesting argument involving the spectrum of a matrix associated to an almost-equiangular set. Pudlák~\cite{Pudlak2002CyclesMatrices} and Deaett~\cite{Deaett2011TheGraph} reproved this result by slightly simpler methods.

Later, Bezdek and Lángi~\cite{Bezdek1999AlmostSd-1} extended Rosenfeld's spectral bound to~$t \in [-1, \varepsilon]$, where~$\varepsilon > 0$ is a number close to~$0$ that depends on the dimension. In particular, they proved that~$\alpha(n, t) \leq 2(n+1)$ on this interval with equality at~$t = -1/n$. An example of an optimal construction at this inner product is the union of two disjoint regular $n$-simplices. Polyanskii~\cite{Polyanskii2017OnII} mentioned a simple lifting argument to obtain~$\alpha(n,t) \leq 2(n+1)$ for~$t \leq 0$ directly from Rosenfeld's original result.

The goal of the current work is two-fold.  First, to obtain better upper bounds on the number~$\alpha(n, t)$, which is done through semidefinite programming and through closer investigation of the spectral bound of Bezdek and Lángi. Both methods reproduce known bounds, and improve many others.  Second, to list all $t$-almost-equiangular subsets of~$S^{n-1}$ of size~$\alpha(n, t)$ for small~$n$. The spectral bound of Bezdek and Lángi again plays an important role; it is used to derive characterizing properties of those $t$-almost-equidistant sets in~$S^{n-1}$ that are maximum for all~$t \in [-1,0]$.


\subsection{Upper bounds through semidefinite programming}

For~$t \in [-1, 1)$, the \defi{equiangular-lines problem} asks for the maximum number of vectors in~$S^{n-1}$ such that any two distinct vectors have inner product~$\pm t$.  This problem can be rephrased in terms of independent sets of graphs.

Indeed, let~$G = (V, E)$ be a graph.  A subset of~$V$ is \defi{independent} if no two distinct vertices in it are adjacent.  The \defi{independence number} of~$G$, denoted by~$\alpha(G)$, is the maximum cardinality of an independent set of~$G$.  Now consider the graph~$G$ whose vertex set is~$S^{n-1}$ and in which distinct points~$x$, $y \in S^{n-1}$ are adjacent if~$x\cdot y \neq \pm t$.  Then independent sets of~$G$ correspond to sets of equiangular lines and vice versa.  It follows that the maximum number of equiangular lines is equal to the independence number~$\alpha(G)$ of~$G$.

This simple connection allowed the development of many optimization bounds for equiangular lines through extensions of the Lovász theta number, a graph parameter introduced by Lovász~\cite{Lovasz1979OnGraph} that gives an upper bound to the independence number of a finite graph (see~\cite{deLaat2021K-PointLines, deLaat2022TheAngle} and references therein).  Extensions of the Lovász theta number are behind many bounds for geometrical parameters, such as the linear programming bound for the kissing number~\cite{Delsarte1977SphericalDesigns}, the Cohn-Elkies bound for the sphere packing density~\cite{Cohn2003NewI}, and the chromatic number of Euclidean space~\cite{deOliveiraFilho2010FourierRn, Bachoc2009LowerNumbers}.

The $t$-almost-equiangular-set problem can be rephrased in terms of independent sets of hypergraphs.  Let~$H = (V, E)$ be an $r$-uniform hypergraph for some~$r \geq 2$ (that is, the edges are $r$-subsets of~$E$).  A subset of~$V$ is \defi{independent} if it does not contain an edge; the \defi{independence number} of~$H$, denoted by~$\alpha(H)$, is the maximum cardinality of an independent set of~$H$.

Given~$n \geq 2$ and~$t \in [-1, 1)$, let~$H(n, t)$ be the $3$-uniform hypergraph whose vertex set is~$S^{n-1}$ and in which a 3-set~$\{x, y, z\}$ of points is an edge if~$t \notin \{x\cdot y, x\cdot z, y\cdot z\}$.  Then independent sets of~$H$ correspond to $t$-almost-equiangular sets and vice versa.  It follows that~$\alpha(n, t) = \alpha(H(n, t))$.

This connection  again opens the door to the development of optimization upper bounds for~$\alpha(n, t)$.  Castro-Silva, Oliveira, Slot and Vallentin~\cite{Castro-Silva2023AHypergraphs} proposed an extension of the Lovász theta number to finite hypergraphs.  It is based on recursion: the theta number of an $r$-uniform hypergraph is defined in terms of the theta number of its \defi{links}, which are $(r-1)$-uniform hypergraphs.  A further extension to infinite hypergraphs by the same authors~\cite{Castro-Silva2022ASets} has applications in Euclidean Ramsey theory.  The current paper proposes an alternative extension of the theta number to infinite hypergraphs like~$H(n, t)$ based on the Lasserre hierarchy and the $k$-point bound~\cite{deLaat2015AGeometry, deLaat2021K-PointLines}.  This bound is strongly related to the semidefinite programming methods developed in~\cite{Bilyk2024OptimizersSets, Bilyk2023OptimalPotentials}, where the authors use similar techniques to reprove Rosenfeld's original bound, and further apply them to energy minimization questions on hypergraphs.

This allows for the computation of upper bounds for~$\alpha(n, t)$ through the use of sums of squares and semidefinite programming.  Analytic bounds can be obtained by interpolating solutions of the resulting semidefinite programming problems, leading to the following theorem proved in Section~\ref{sec:Bounds}.

\begin{theorem}%
\label{thm:interpolation}
    If~$t \in [-1, 0]$ and~$n \geq 3$, and if~$f(n,t) \coloneqq p^2n(1-t)^2/(2(nt^2+1))$, where
    \[
        p \coloneqq \frac{8n^2t^4(2n-1) - 9n^2t^3(n-1) + (2nt^2 -3t + 4)(7n+1)}{2(1-t)(1+7n - 2n^2t^3(2n-1))},
    \]
    then $\alpha(n,t) \leq \floor{f(n,t)} \leq \floor{(16t-9)^2/(128t^2)}$.
\end{theorem}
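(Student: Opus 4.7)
The plan is to invoke the semidefinite programming framework announced in the introduction — the extension of the Lovász theta number to the infinite $3$-uniform hypergraph $H(n,t)$ via the Lasserre hierarchy and the $k$-point bound — and to exhibit an explicit feasible solution of the associated dual program whose objective value equals~$f(n,t)$. Since this SDP is an upper bound on~$\alpha(H(n,t)) = \alpha(n,t)$, an integrality step then yields $\alpha(n,t) \leq \lfloor f(n,t)\rfloor$, and the last inequality follows by an algebraic simplification.

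The first step is to apply $O(n)$-symmetrization to the SDP. Standard harmonic analysis on~$S^{n-1}$ reduces feasible dual solutions to positive combinations of Gegenbauer-type polynomials in one or two variables (the pairwise inner products of the points), together with scalar parameters playing the role of Lagrange multipliers for the $k$-point constraints. The non-edges of~$H(n,t)$ are precisely those triples in which at least one pairwise inner product equals~$t$, so the SDP inequalities must be enforced only at configurations of this type while the objective is controlled by the diagonal behavior of the kernel on~$S^{n-1}$.

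The second step is to propose a low-degree polynomial ansatz depending on a single real parameter~$p$. The form $f(n,t) = p^2 n(1-t)^2/(2(nt^2+1))$ strongly suggests that the ansatz is quadratic in~$p$ (so that the objective is a perfect square, forced by a symmetric rank-one feasible certificate), and that the prefactor $n(1-t)^2/(2(nt^2+1))$ is produced by integrating the diagonal part of the kernel against the normalized surface measure on~$S^{n-1}$. The complicated rational function defining~$p$ is then the stationary point of a one-parameter family of feasible kernels; it is found by setting the derivative of the objective with respect to~$p$ equal to zero, subject to the feasibility constraints at the boundary of the admissible region (which are polynomial inequalities in~$t$, linear in~$p$). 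I expect the main obstacle to lie here: verifying that the proposed ansatz is actually feasible on the full domain of non-edges. Concretely, this amounts to a sum-of-squares certificate on the interval~$[-1,1]$ (or suitable subsets), which must be produced either by an explicit SoS decomposition or by invoking positivity of a specific Gegenbauer expansion.

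Once the feasible solution is certified and the optimal~$p$ determined, substitution gives the stated value of~$f(n,t)$. The final inequality $f(n,t) \leq (16t-9)^2/(128t^2)$ is a uniform-in-$n$ bound, presumably obtained by either letting~$n \to \infty$ or by maximizing in~$n \geq 3$: one writes~$p$ as a function of~$n$, checks monotonicity in~$n$ (or computes its supremum), and combines with the limit of the prefactor $n(1-t)^2/(2(nt^2+1)) \to (1-t)^2/(2t^2)$ for $t \neq 0$. The resulting estimate is a routine, if somewhat tedious, algebraic simplification, and the quadratic numerator $(16t-9)^2$ should emerge as the limiting form of the numerator of~$p$ divided by the limiting denominator.
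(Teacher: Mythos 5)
Your overall strategy is the same as the paper's: symmetrize the $3$-point bound~\eqref{opt:sphere-kpb-dual} for $H(n,t)$, write the dual variable in terms of Gegenbauer/Bachoc--Vallentin expansions, exhibit an explicit feasible solution whose objective value is $f(n,t)$, and finish with $\lfloor\cdot\rfloor$ and an algebraic comparison with $(16t-9)^2/(128t^2)$. The problem is that your proposal stops exactly where the mathematical content of the theorem begins. The statement asserts a very specific $p$ and $f(n,t)$, so the proof must produce and certify a concrete feasible solution realizing that value; you never do this, and you yourself flag feasibility as ``the main obstacle'' and leave it unresolved. In the paper this is the entire proof: one takes a degree-$0$ kernel $K_\emptyset$ and a degree-$4$ kernel $K_e$ determined by four explicit matrices $A_0^\emptyset, A_0^e, A_1^e, A_2^e$ with rational-function entries in $n$ and $t$; positive semidefiniteness on $n\geq 3$, $t\in[-1,0]$ is proved via an $LDL^{\sf T}$ factorization of $A_0^e$ together with sum-of-squares certificates (computer-assisted, in the arXiv supplement); and one verifies symbolically that $B_3A(\{x\})=-1$, $B_3A(\{x,y\})=0$, and $B_3A(\{x,y,z\})=3(t-x\cdot y)(t-x\cdot z)(t-y\cdot z)/(t-1)^3$, which vanishes on every independent triple, while $(B_3A)(\emptyset)=f(n,t)$. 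Declaring that $p$ is ``the stationary point of a one-parameter family of feasible kernels'' without specifying the family or proving feasibility is a plan for finding a certificate, not a proof that this $p$ works.

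Two further structural points are off. First, the objective of the dual is $(B_3A)(\emptyset)$, i.e.\ the value of the invariant function at the empty configuration (the top-left entry of $A_0^\emptyset$, which equals $p^2 n(1-t)^2/(2(nt^2+1))$ by construction); no integration of the diagonal of the kernel against the surface measure enters, so the heuristic you use to explain the prefactor does not match the actual mechanism. Second, the final inequality $f(n,t)\leq (16t-9)^2/(128t^2)$ is not obtained in the paper by a monotonicity-in-$n$ argument (which you would have to prove, not merely expect, and which is not obviously routine given the shape of $p$); it is certified by a separate sum-of-squares certificate. So while your route is the intended one, as written it has a genuine gap: the feasible solution whose existence carries the whole theorem is never exhibited or verified.
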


The bound~$(16t-9)^2/(128t^2)$ in this theorem is an asymptotic bound; it is the limit of~$f(n,t)$ as~$n$ goes to infinity. That there exists an upper bound that does not depend on the dimension~$n$ is consistent with the existence of the constructions considered in this paper not explicitly depending on the embedding dimension~$n$ if~$t$ is far enough removed from~$-1/n$.


\subsection{Lower bounds through constructions}

If~$S$ is a $t$-almost-equiangular set, then its \defi{distance-$t$ graph}, namely the graph with vertex set~$S$ in which~$x$ and~$y$ are adjacent if~$x\cdot y = t$, is \defi{anti-triangle free}, that is, its complement does not contain triangles.  

Necessary and sufficient conditions for some anti-triangle-free graphs to be the distance graph of an almost-equiangular set are given in Section~\ref{sec:realizability}.  Together with the optimization bound of Theorem~\ref{thm:interpolation} and the results of Section~\ref{sec:properties-and-uniqueness}, this leads to constraints for the existence of $t$-almost-equiangular sets of certain sizes, making it possible to list all optimal such sets for dimensions~$n = 2$ and~$3$.  This search leads to the optimal constructions listed in Section~\ref{sec:low-dim} and summarized in Figure~\ref{fig:dim2and3}. 

\begin{figure}[t]%
    \centerfloat
    \includegraphics[width=1.2\textwidth]{./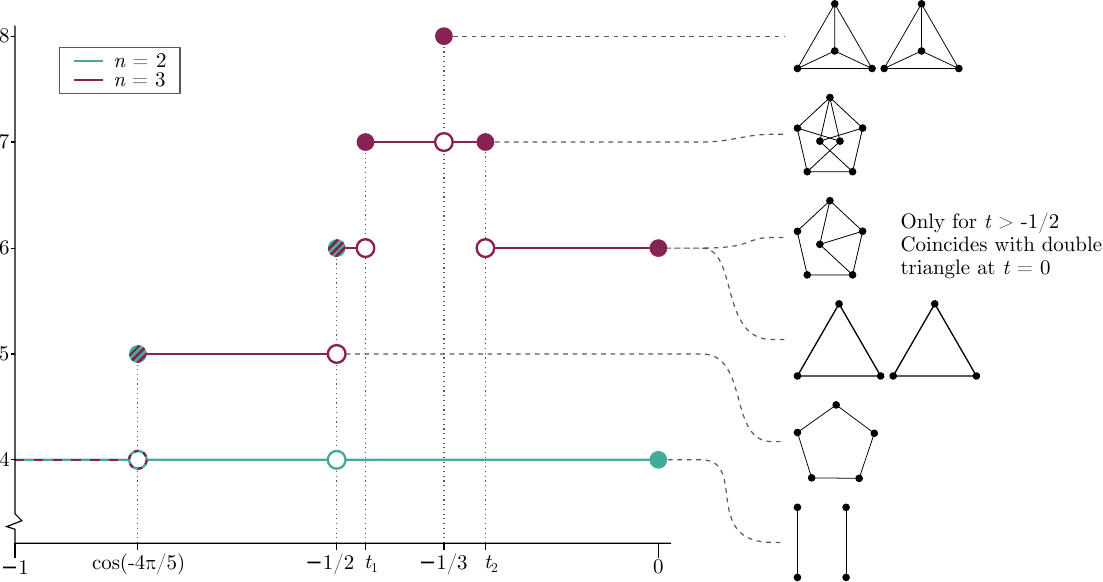}
    \caption{The classification of maximum-cardinality $t$-almost-equiangular sets in~$S^1$ and~$S^2$ for~$t \in [-1, 0]$. The vertical axis is the cardinality of the set, the horizontal axis is the inner product~$t$. Open bullets indicate that a point is excluded from the interval while closed bullets indicate that the point is included. Green and purple stripes indicate that in both dimensions the same maximum cardinality is attained. The numbers~$t_1$ and~$t_2$ are the first two roots of~\eqref{eq:MoserSpindlePolynomial} with~$k=2$.}
    \label{fig:dim2and3}
\end{figure}


\subsection{Maximum obtuse almost-equiangular sets}

Both the semidefinite programming bound of Theorem~\ref{thm:interpolation} and the spectral bound of Rosenfeld~\cite{Rosenfeld1991AlmostEd} and Bezdek and Lángi~\cite{Bezdek1999AlmostSd-1} show that~$\alpha(n,t) \leq 2(n+1)$ for all~$n$ and~$t \leq 0$. In Section~\ref{sec:properties-and-uniqueness} the spectral bound is investigated further to show that equality for nonpositive~$t$ is only attained at~$t = -1/n$. In light of this, call the maximum $(-1/n)$-almost-equidistant sets on~$S^{n-1}$ \defi{maximum obtuse almost-equiangular sets}. Inspection of the matrices that are associated to the maximum obtuse almost-equiangular sets in the proof of the spectral bound reveals several interesting properties of these sets, like the following result.

\begin{theorem}%
\label{cor:optimal-construction-are-2-designs}
        If~$S=\{x_1,\ldots, x_{2(n+1)}\}$ is a maximum obtuse almost-equiangular subset of $S^{n-1}$, then~$S$ is a spherical $2$-design.
\end{theorem}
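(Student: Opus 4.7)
The plan is to exploit the equality case in the Bezdek--L\'angi spectral bound analyzed in Section~\ref{sec:properties-and-uniqueness}. Let $X$ be the $n\times 2(n+1)$ matrix with columns $x_1,\ldots,x_{2(n+1)}$, let $G = X^\tp X$ be the Gram matrix, and consider the auxiliary matrix
\[
    M := G + \tfrac{1}{n}J,
\]
where $J$ is the all-ones matrix. Since $t = -1/n \le 0$, $M$ is positive semidefinite with $\rank M \le n+1$, constant diagonal $(n+1)/n$, and $M_{ij}=0$ whenever $x_i\cdot x_j = -1/n$. The almost-equiangular assumption says exactly that the off-diagonal support of $M$ is triangle-free, so the expansion of $\trace(M^3)$ collapses to contributions from triples of indices with a repeat. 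Combining the Cauchy--Schwarz inequalities $(\trace M^2)^2 \le \trace M\cdot \trace M^3$ and $(\trace M)^2 \le \rank(M)\trace M^2$ reproves $|S|\le 2(n+1)$ and, at equality, forces all $n+1$ nonzero eigenvalues of $M$ to coincide at $\tfrac{2(n+1)}{n}$. Thus $M = \tfrac{2(n+1)}{n}P$ for some rank-$(n+1)$ orthogonal projection $P$.

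The main step is to deduce $\mathbf{1}\in\ker G$, i.e.~that $\sum_i x_i = 0$. Since $\rank G\le n < n+1 = \rank M$, the vector $\mathbf{1}$ cannot lie in $\col G$. Squaring the relation $M = \tfrac{2(n+1)}{n}P$ and expanding $M^2$ in terms of $G$, $J$ and $s := G\mathbf{1}$, the $J$-terms cancel exactly---this is where the matched values $|S| = 2(n+1)$ and $t = -1/n$ enter together. Applying the resulting identity to $\mathbf{1}$ yields $G s = -\tfrac{\sigma}{n}\mathbf{1}$ with $\sigma := \mathbf{1}^\tp G\mathbf{1} = \|X\mathbf{1}\|^2 \ge 0$. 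Since $Gs\in\col G$ while $\mathbf{1}\notin\col G$, we must have $\sigma = 0$, so $\sum_i x_i = 0$ and consequently $s = G\mathbf{1} = 0$.

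Once $\mathbf{1}\in\ker G$ is known, it is orthogonal to $\col G$, and $M\mathbf{1} = \tfrac{2(n+1)}{n}\mathbf{1}$ shows $\mathbf{1}\in V := \col M$, so $V = \R\mathbf{1}\oplus \col G$ orthogonally. Decomposing $P = \tfrac{1}{|S|}J + P_W$ with $W = \col G$ and substituting into $M = \tfrac{2(n+1)}{n}P$ makes the $J$-terms cancel once again, yielding $G = \tfrac{2(n+1)}{n}P_W$. Therefore all $n$ nonzero eigenvalues of $G$ equal $\tfrac{2(n+1)}{n}$, and since $XX^\tp$ is $n\times n$ with the same nonzero spectrum as $G$, we obtain $XX^\tp = \tfrac{2(n+1)}{n}I_n = \tfrac{|S|}{n}I_n$. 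Combined with $\sum_i x_i = 0$, this is precisely the defining property of a spherical $2$-design. The main obstacle is establishing $\mathbf{1}\in\ker G$; it rests on the fortunate cancellation of the $J$-terms in the expansion of $M^2$, which only occurs at the simultaneously matched pair $t = -1/n$ and $|S| = 2(n+1)$.
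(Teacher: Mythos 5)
Your proof is correct, and it reaches the conclusion by a partly different route than the paper. The paper proves this theorem as a quick corollary of Theorem~\ref{thm:maximal-almost-equiangular-sets}: there the bound and its equality case are obtained by the trace equations $\trace B=\trace(B^3)=0$ together with a Lagrangian-relaxation argument pinning the eigenvalues, and the barycenter condition $Ue=0$ comes from a column-space/eigenspace comparison of $U$ and $C=U-tJ$; the second-moment identity is then read off entrywise from $U^2=2(1+1/n)U$ and extended by linearity since the $x_i$ span $\R^n$. You instead rederive the equality-case structure of $M=G+\tfrac1n J$ (which is exactly the paper's $C$ at $t=-1/n$) in the Pudl\'ak--Deaett style: the triangle-free support collapses $\trace(M^3)$, and two Cauchy--Schwarz inequalities force, at $|S|=2(n+1)$, that $M=\tfrac{2(n+1)}{n}P$ with $P$ a rank-$(n+1)$ projection --- I checked the resulting quadratic $(\trace M^2-Nc^2)(\trace M^2-2Nc^2)\le 0$ and the equality forcing, and they work. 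Your derivation of $\sum_i x_i=0$ is the most genuinely different step: applying $M^2=\tfrac{2(n+1)}{n}M$ to $\mathbf{1}$ gives $G(G\mathbf{1})=-\tfrac{\sigma}{n}\mathbf{1}$ with $\sigma=\|\sum_i x_i\|^2$, and $\mathbf{1}\notin\col G$ kills $\sigma$; this replaces the paper's eigenspace argument and is arguably cleaner. The endgame ($G=\tfrac{2(n+1)}{n}P_W$, hence $XX^\tp=\tfrac{|S|}{n}I_n$) is equivalent to the paper's $\sum_k(u\cdot x_k)^2=2(1+1/n)$, and both proofs then invoke the standard first-and-second-moment characterization of spherical $2$-designs rather than the bare definition, so your final assertion is at the same level of citation as the paper's. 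Two points you leave implicit but which are immediate: $\rank G=n$ (from $\col M\subseteq\col G+\R\mathbf{1}$ and $\rank M=n+1$), which is what justifies $\col M=\R\mathbf{1}\oplus\col G$ and the decomposition $P=\tfrac{1}{|S|}J+P_W$; and the positive semidefiniteness of $M$ needed for the cubic Cauchy--Schwarz, which you do note via $t\le 0$. What your approach buys is self-containedness (you do not need the full strength of Theorem~\ref{thm:maximal-almost-equiangular-sets}, whose extra content --- the bound for all $t\le 0$ and equality only at $t=-1/n$ --- is irrelevant here since $t=-1/n$ is built into the definition); what the paper's route buys is that the structural facts are proved once and reused for Theorems~\ref{thm:O-matrix} and~\ref{thm:double-simplex-uniqueness}.
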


Deaett proved~\cite{Deaett2011TheGraph} that there is a bijection between the maximum almost-orthogonal sets in~$S^{n-1}$ and certain~$2n \times 2n$ symmetric orthogonal matrices. Any $t$-almost-equidistant set in~$S^{n-1}$ with~$t \leq 0$ can be lifted to an almost-orthogonal set on~$S^n$~\cite{Polyanskii2017OnII}, and so it is expected that there is a version of this bijection for maximum nonpositive almost-equidistant sets as well. The bijection is made precise in the following theorem, which  is Deaett's correspondence with the addition of the eigenvector condition~(1).  Here,~$e$ is the all-ones vector.

\begin{theorem}%
\label{thm:O-matrix}
There is a bijection between maximum obtuse almost-equiangular subsets of $S^{n-1}$ up to orthogonal transformations and $2(n+1) \times 2(n+1)$ symmetric, orthogonal matrices~$O$ such that
\begin{enumerate}
\item $Oe=e$;

\item $O_{ii}=0$ for all~$i$;

\item $O_{ij}O_{jk}O_{ki}=0$ for all $i$, $j$, and~$k$.
\end{enumerate}
\end{theorem}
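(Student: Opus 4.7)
The plan is to reduce Theorem~\ref{thm:O-matrix} to Deaett's bijection~\cite{Deaett2011TheGraph} via the standard lift from $S^{n-1}$ to $S^n$. I would define $\Phi \colon S^{n-1} \to S^n$ by $\Phi(x) = \sqrt{n/(n+1)}\,x + (1/\sqrt{n+1})\,e_{n+1}$, where $e_{n+1}$ is the last standard basis vector in $\R^{n+1}$. A short calculation shows $\Phi(x) \cdot \Phi(y) = 0$ iff $x \cdot y = -1/n$, so $\Phi$ carries maximum obtuse almost-equiangular subsets of $S^{n-1}$ (up to $\ortho(n)$) to maximum almost-orthogonal subsets of $S^n$ (up to the stabilizer of $e_{n+1}$ inside $\ortho(n+1)$). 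Deaett's bijection, applied in dimension $n+1$, then identifies the latter with the $2(n+1) \times 2(n+1)$ symmetric orthogonal matrices $O$ satisfying (2) and (3), via $O = G - I$ where $G$ is the Gram matrix of the lifted set.

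For the forward direction, I would start from a maximum obtuse almost-equiangular set $S = \{x_1, \ldots, x_{2(n+1)}\}$ and let $O$ be the matrix associated to its lift $\{y_i\} = \{\Phi(x_i)\}$. Theorem~\ref{cor:optimal-construction-are-2-designs} gives $\sum_i x_i = 0$ (2-designs are balanced), so $\sum_i y_i = 2\sqrt{n+1}\,e_{n+1}$. Taking the inner product of this with each $y_i$ and using $y_i \cdot e_{n+1} = 1/\sqrt{n+1}$ yields $(Ge)_i = 2$, hence $Ge = 2e$ and $Oe = e$. This is precisely condition~(1).

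Conversely, suppose $O$ satisfies (1), (2), (3). By Deaett, $G = O + I$ is the Gram matrix of a maximum almost-orthogonal set $\{y_i\} \subset S^n$, unique up to $\ortho(n+1)$. Condition~(1) gives $Ge = 2e$, so the sum vector $s = \sum_i y_i$ satisfies $\|s\|^2 = e^\tp G e = 4(n+1)$ and $y_i \cdot s = (Ge)_i = 2$. Setting $v = s/\|s\|$, every $y_i$ has the same component $1/\sqrt{n+1}$ along $v$. After rotating $v$ to $e_{n+1}$, each $y_i$ necessarily has the form $\Phi(x_i)$ for a unique $x_i \in S^{n-1}$, and the reverse of the lifting identity shows $\{x_i\}$ is $(-1/n)$-almost-equiangular. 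The two procedures are mutually inverse at the level of equivalence classes since the residual rotational freedom fixing $e_{n+1}$ inside $\ortho(n+1)$ is precisely $\ortho(n)$ acting on the $x_i$.

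The main obstacle will be invoking Deaett's correspondence with full rigor in the lifted setting and tracking orthogonal equivalence classes consistently on both sides; once that framework is in place, the genuinely new content---condition~(1)---is captured by the short eigenvector computation above, which is powered by the 2-design property furnished by Theorem~\ref{cor:optimal-construction-are-2-designs}.
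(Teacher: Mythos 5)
Your proposal is correct, but it takes a genuinely different route from the paper. You reduce the statement to Deaett's bijection for maximum almost-orthogonal sets via the lift $\Phi(x) = \sqrt{n/(n+1)}\,x + (1/\sqrt{n+1})\,e_{n+1}$, and you isolate the new content, condition~(1), in the barycenter identity $\sum_i x_i = 0$ (available from Theorem~\ref{cor:optimal-construction-are-2-designs}, or directly from Theorem~\ref{thm:maximal-almost-equiangular-sets}); in the converse you use $Ge = 2e$ to produce the common axis $s/\|s\|$ and un-lift, with the residual freedom being exactly $\stab{e_{n+1}} \cong \ortho(n)$. The paper instead argues directly and self-containedly: forward, it sets $O = (1+1/n)^{-1}B$ with $B = U + (1/n)J - (1+1/n)I$ and reads conditions (1)--(3) off the spectral facts $B^2 = (1+1/n)^2 I$ and $Be = (1+1/n)e$ from Theorem~\ref{thm:maximal-almost-equiangular-sets}; conversely, it reconstructs $U = (1+1/n)O - (1/n)J + (1+1/n)I$ and shows by an eigenspace count (kernel $E_{-1} \oplus \R e$ of dimension $n+2$) that $U$ is the Gram matrix of $2(n+1)$ unit vectors in $\R^n$, with condition~(3) giving the $(-1/n)$-almost-equiangularity. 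Note that the two constructions produce literally the same matrix, since the Gram matrix of your lifted set is $(n/(n+1))U + (1/(n+1))J$, so $G - I = (n/(n+1))U + (1/(n+1))J - I$, which is the paper's $O$. What each approach buys: yours makes the descent from Deaett's correspondence transparent (which the paper only discusses informally after the theorem) and is short modulo that citation, but it imports Deaett's theorem at full strength (existence, distinctness of the points, and uniqueness up to $\ortho(n+1)$) as a black box, whereas the paper's proof needs nothing beyond its own Theorem~\ref{thm:maximal-almost-equiangular-sets}. Both arguments share the same mild bookkeeping gloss about labelings of the points versus orbits of sets, so that is not a point against you.
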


Call the union of two disjoint regular  $n$-simplices a \defi{double regular $n$-simplex}. It remains an open question whether a maximum obtuse almost-equiangular set is always a double regular $n$-simplex. However, with the help of Theorem~\ref{thm:O-matrix} the question is settled for $2 \leq n \leq  5$.

\begin{theorem}%
\label{thm:double-simplex-uniqueness}
If~$2 \leq n \leq 5$, then any maximum obtuse almost-equiangular set in~$S^{n-1}$ is a double regular $n$-simplex.
\end{theorem}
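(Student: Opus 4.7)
The plan is to invoke Theorem~\ref{thm:O-matrix} to translate the statement into a classification problem for symmetric orthogonal matrices~$O$ of size~$2(n+1)$ satisfying~(1)--(3). The first observation is that, under the bijection, a double regular $n$-simplex corresponds to an~$O$ whose support graph~$\Gamma$ (with edge~$\{i,j\}$ whenever $O_{ij}\ne 0$) is bipartite: via Polyanskii's lifting each regular $n$-simplex on~$S^{n-1}$ becomes an orthonormal basis of~$\R^{n+1}$, so two such simplices produce an~$O$ of the block form $O = \smallpmatrix{0 & P\\ P^\tp & 0}$ with~$P$ an orthogonal $(n+1)\times(n+1)$ matrix satisfying $Pe = e$ and $P^\tp e = e$. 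Conversely, any~$O$ satisfying (1)--(3) with bipartite support must have both parts of size~$n+1$ and take the block form above, whence the corresponding set is a double regular $n$-simplex. The theorem is therefore equivalent to showing that, for $2(n+1) \in \{6,8,10,12\}$, every~$O$ satisfying~(1)--(3) has bipartite support.

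For $n = 2$ and $n = 3$ this follows from the full classification of maximum-cardinality $t$-almost-equiangular sets in~$S^1$ and~$S^2$ carried out in Section~\ref{sec:low-dim}: at $t = -1/n$ only the double regular $n$-simplex appears on the list. For $n = 4$ and~$n = 5$ I would first extract structural constraints on~$O$. Since $\trace(O) = 0$, the matrix has~$n+1$ eigenvalues~$+1$ and~$n+1$ eigenvalues~$-1$, so $O = 2P_+ - I$ with $P_+$ the orthogonal projection onto an $(n+1)$-dimensional subspace; condition~(1) places~$e$ in that subspace, condition~(2) gives $(P_+)_{ii} = 1/2$ for every~$i$, and $O^2 = I$ together with zero diagonal gives $\sum_{j\ne i} O_{ij}^2 = 1$ in each row. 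Theorem~\ref{cor:optimal-construction-are-2-designs} provides additional moment relations on the entries of~$O$ through the spherical $2$-design property of the underlying set.

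With these constraints the support~$\Gamma$ is a triangle-free graph on~$10$ or~$12$ vertices, and it suffices to prove that~$\Gamma$ is bipartite. I would proceed case by case (likely with computer assistance) through the list of triangle-free graphs of the appropriate size, showing that every non-bipartite candidate---for example the Petersen graph on $10$ vertices, or its pentagonal analogues on $12$ vertices---produces an inconsistent system of orthogonality, eigenvector, and diagonal equations, so that the only triangle-free supports admitting a valid~$O$ are bipartite. The main obstacle is precisely this exhaustive elimination step; the dimension threshold $n \le 5$ in the statement reflects the expectation that for larger~$n$ the compatibility constraints become too loose to rule out non-bipartite supports by purely structural means, which is why the general uniqueness question remains open.
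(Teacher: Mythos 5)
Your reformulation via Theorem~\ref{thm:O-matrix} is sound and matches the paper's starting point: the off-diagonal support of~$O$ is exactly the complement~$\overline{G}$ of the distance graph, and bipartiteness of that support is equivalent to the set being a double regular $n$-simplex (this is property~(1) of Lemma~\ref{lem:props-maximum-sets}, plus the converse block-matrix computation you sketch). The gap is in how you propose to rule out non-bipartite supports for $n=4,5$. You defer the entire difficulty to an unspecified, computer-assisted elimination over triangle-free graphs on $10$ or $12$ vertices, asserting that each non-bipartite candidate yields an inconsistent system of orthogonality, eigenvector, diagonal, and $2$-design equations. That assertion is essentially the theorem itself: you give no mechanism for certifying infeasibility of these quadratic systems and no argument that the constraints you list are strong enough to do so. Indeed the paper shows they are not enough by themselves: the surviving candidates are refuted by a genuinely geometric argument, not by the algebraic conditions on~$O$.

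Concretely, the paper's proof rests on two ideas absent from your plan. First, quadrangularity of~$\overline{G}$ (which follows from pairwise orthogonality of the columns of~$O$, Lemma~\ref{lem:props-maximum-sets}(2)), combined with triangle-freeness, collapses the candidate list without any search: if the shortest odd cycle of~$\overline{G}$ has length $2k+1$, each pair $p_i,p_{i+2}$ needs a second common neighbor $q_{i+1}$, all distinct, so the order is at least $2(2k+1)$ and hence $n\ge 2k$. This already settles $n=2,3$ (so your appeal to the Section~\ref{sec:low-dim} graph search is unnecessary, though not circular), and for $n=4$ it forces $\overline{G}\cong W_5$, while for $n=5$ it forces $G$ to contain the graph $H_{12}$; your Petersen-graph example, for instance, dies instantly because its nonadjacent vertices have exactly one common neighbor. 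Second, the residual cases $\overline{W_5}$ and $H_{12}$ are eliminated in Section~\ref{subsec:NonRealizableGraphs} by a geometric circumsphere argument: the cliques $\Sigma_i$ are regular simplices, forcing certain quadruples of points onto circles, and two distinct circles would have to meet in at least four points, a contradiction. Without a concrete substitute for these two steps --- a finite, justified candidate list and a refutation method for each survivor --- your proposal remains a plausible programme rather than a proof.
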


\section{Preliminaries}%
\label{sec:prelim}

The Euclidean inner product between~$x$ and~$y \in \R^n$ is denoted by~$x\cdot y$.  The \defi{trace inner product} between matrices~$A$, $B \in \R^{n \times n}$ is denoted by~$\langle A, B\rangle = \trace A^\tp B$.  The $(n-1)$-dimensional unit sphere is denoted by~$S^{n-1} = \{\,x \in \R^n : \|x\|=1\,\}$.

Let~$X$ be a topological space.  The set of real-valued continuous functions on~$X$ is denoted by~$C(X)$.  A \defi{kernel} on~$X$ is a function in~$C(X^2)$.  A kernel~$K \in C(X^2)$ is \defi{positive semidefinite} if for every finite set~$U \subseteq X$ the matrix~$\bigl(K(x, y)\bigr)_{x, y \in U}$ is positive semidefinite.

\subsection{Hypergraphs}

A \defi{hypergraph} is a pair~$H = (V, E)$ where~$V$ is a set and~$E$ is a collection of subsets of~$V$.  The set~$V$, also denoted by~$V(H)$, is the \defi{vertex set} of~$H$, and the set~$E$, also denoted by~$E(H)$, is the \defi{edge set} of~$H$.  Elements of~$V$ are called \defi{vertices} and elements of~$E$ are called \defi{edges}.  The hypergraph is called \defi{$r$-uniform} if all edges have cardinality~$r$.  All hypergraphs considered in this paper are $r$-uniform for some~$r$,  so the adjective ``$r$-uniform'' will often be omitted.

Let~$H = (V, E)$ be a hypergraph.  Given~$S \subseteq V$, the subgraph of~$H$ \defi{induced} by~$S$, denoted by~$H[S]$, is the hypergraph with vertex set~$S$ whose edges are all edges of~$H$ contained in~$S$.  For any set~$S \subseteq V$, write~$H - S \coloneqq H[V\setminus S]$.

A subset of~$V$ is \defi{independent} if it contains no edge of~$H$.  The \defi{independence number} of~$H$, denoted by~$\alpha(H)$, is the maximum cardinality of an independent set.  The set of all independent sets of~$H$ of size~$\leq k$ is denoted by~$\inds{k}$.  The set of all independent sets of size~$k$ is denoted by~$\inds{=k}$.  When these notations are used, the hypergraph will be clear from context.

\subsection{Spaces of subsets}%
\label{sec:spaces-of-subsets}

Denote by~$\sub{V}{k}$ the set of all subsets of~$V$ of cardinality at most~$k$, including the empty set. Let
\[
    \set{\,} \colon V^k \to \sub{V}{k} \setminus \{ \emptyset \}
\]
be the map that sends a~$k$-tuple to the set of its coordinates, so~$\set{(s_1, \ldots, s_k)} = \{s_1, \ldots, s_k\}$. This map is surjective. If~$V$ is a topological space, the \defi{standard topology} on~$\sub{V}{k} \setminus \{\emptyset\}$ is the quotient topology under~$\set{\,}$; see Handel~\cite{Handel2000SomeSpaces} for more background on the standard topology.

Let~$H = (V, E)$ be a hypergraph where~$V$ is a topological space.  It will be necessary to work with continuous functions on~$\inds{k} \subseteq \sub{V}{k}$, and for that it is necessary to equip~$\inds{k}$ with a topology.

A natural choice is the relative topology from~$\sub{V}{k}$, but this places an unnecessary restriction on the continuous functions that can be considered.  An alternative is to give~$\inds{=i}$ the relative topology and to equip~$\inds{k}$ with the disjoint union topology of the topological spaces~$\inds{=i}$, that is,~$\inds{k}=\coprod_{i=0}^k \inds{=i}$.  This is equivalent to defining a function on~$\inds{k}$ to be continuous if it is piecewise continuous on each~$\inds{=i}$, that is, the set of continuous functions~$C(\inds{k})$ can be identified with
\[
    \bigoplus_{i=0}^k C(\inds{=i}).
\]

\subsection{Geometry}%
\label{sec:prelim-geometry}

Given~$V \subseteq S^{n-1}$ and~$t \in [-1, 1)$, the \defi{distance-$t$ graph} of~$V$ is the graph whose vertex set is~$V$ and in which~$x$ and~$y$ are adjacent if~$x\cdot y = t$.  A graph~$G = (V, E)$ is \defi{$(n, t)$-realizable} if there is an injection~$f\colon V \to S^{n-1}$ such that~$f(x) \cdot f(y) = t$ for every~$xy \in E$.  If~$xy \notin E$, then there is no constraint on~$f(x) \cdot f(y)$.

An \defi{$(n - 1)$-sphere} is a translated and scaled copy of~$S^{n-1}$. Let~$S$ be an $(n-1)$-sphere~$S$ with radius~$r$, and let~$k \leq n - 1$. A \defi{great $k$-sphere of~$S$} is a $k$-sphere with radius~$r$ contained in~$S$. A great $k$-sphere of~$S^{n-1}$ is then the intersection of~$S^{n-1}$ with a~$(k + 1)$-dimensional linear subspace of~$\R^n$. A great $1$-sphere is called a \defi{great circle}.

An \defi{~$n$-simplex} is the convex hull of~$n + 1$ affinely independent points in Euclidean space.  An $n$-simplex is often identified with its set of~$n+1$ vertices.
A \defi{regular $n$-simplex} with \defi{inner product~$t$} is a regular simplex whose vertices all lie on a unit sphere and have pairwise inner product~$t$. The $t$-distance graph of a regular~$n$-simplex with inner product~$t$ is isomorphic to~$\comp{n+1}$, the complete graph on~$n+1$ vertices.  Conversely, for all~$n \geq k$ and~$d > 0$, the graph~$\comp{k+1}$ is $(n,t)$-realizable, and its realization is a regular $k$-simplex with inner product~$t$.

The \defi{circumsphere} of an~$n$-simplex in~$\R^n$ is the unique sphere that goes through all the vertices of the simplex~\cite[Section 1.4]{Fiedler2011MatricesGeometry}. For~$S \subseteq \R^n$, let~$\Aff{S}$ denote  the affine span of~$S$. In general, if~$S$ is a~$k$-simplex contained in~$\R^{n}$, define its circumsphere as the circumsphere of~$S$ in~$\Aff{S}$.  With this definition, the circumsphere of a $k$-simplex in~$\R^n$ is unique.


\section{Optimization bounds for the independence number of hypergraphs}
\label{sec:optimization-bounds}

The Lovász theta number~\cite{Lovasz1979OnGraph}, defined as the optimal solution of a
semidefinite program, gives an upper bound for the independence number of a
graph.  It can be extended to give an upper bound for the independence number of
hypergraphs as well.  One such extension was proposed by Castro-Silva, Oliveira,
Slot, and Vallentin~\cite{Castro-Silva2023AHypergraphs} by using the theta number recursively.
This section offers two more extensions, based on the Lasserre
hierarchy~\cite{Lasserre2001GlobalMoments, Lasserre2002AnPrograms, Laurent2003AProgramming}, that are better suited for the
almost-equiangular-set problem considered in this paper.

Let~$H = (V, E)$ be a finite $r$-uniform hypergraph and for integer~$k \geq 0$ denote
by~$\inds{k}$ the set of all independent sets of~$H$ of cardinality at most~$k$.
Let~$M_r\colon \R^{\inds{2r}} \to \R^{\inds{r}^2}$ be the operator such that
\[
  (M_r\nu)(S, T) \coloneqq \begin{cases}
    \nu(S \cup T)&\text{if $S \cup T$ is independent;}\\
    0&\text{otherwise}
  \end{cases}
\]
and consider the optimization problem
\begin{equation}%
  \label{opt:lasserre-primal}
  \begin{optprob}
    \lassn(H) \coloneqq \max&\sum_{x \in V} \nu(\{x\})\\
    &\nu(\emptyset) = 1,\\
    &M_r\nu\text{ is positive semidefinite,}\\
    &\nu \in \R_+^{\inds{2r}}.
  \end{optprob}
\end{equation}

The optimal value of this problem is an upper bound for the independence number
of~$H$.  Indeed, let~$I \subseteq V$ be independent and set
\begin{equation}%
  \label{eq:nu-def}
  \nu(S) \coloneqq \begin{cases}
    1&\text{if~$S \subseteq I$;}\\
    0&\text{otherwise}
  \end{cases}
\end{equation}
for all~$S \in \inds{2r}$.  Then~$\nu$ is a feasible solution
of~\eqref{opt:lasserre-primal} with objective value~$|I|$, whence~$\lassn(H)
\geq \alpha(H)$. 

Note that~\eqref{opt:lasserre-primal} looks very much like the~$r$th level of
the Lasserre hierarchy for the independent-set problem of graphs, the difference
being which sets are considered independent.  It is also possible to define
a converging hierarchy of better and better bounds for the independence number
of~$H$ by allowing larger subsets in~\eqref{opt:lasserre-primal}, that is, by
taking~$\inds{2k}$ for~$k > r$; such a hierarchy is not explored in this paper.

Now let~$N_r\colon \R^{\inds{r}} \to \R^{\inds{1}^2\times \inds{r - 2}}$ be the
operator such that
\[
  (N_r\nu)(S, T, Q) \coloneqq \begin{cases}
    \nu(S \cup T \cup Q)&\text{if $S \cup T \cup Q$ is independent;}\\
    0&\text{otherwise.}
  \end{cases}
\]
Here~$S$ and~$T$ are either the empty set or singletons and every singleton set is independent.

A function~$A\colon \inds{1}^2 \times \inds{r-2} \to \R$ is \textit{positive
semidefinite} if for every~$Q \in \inds{r-2}$ the matrix~$(S, T) \mapsto A(S, T,
Q)$ is positive semidefinite.  Consider the problem
\begin{equation}%
  \label{opt:kpb-primal}
  \begin{optprob}
    \kpbn(H) \coloneqq \max&\sum_{x \in V} \nu(\{x\}),\\
    &\nu(\emptyset) = 1,\\
    &N_r\nu\text{ is positive semidefinite,}\\
    &\nu \in \R_+^{\inds{r}}.
  \end{optprob}
\end{equation}
Again,~$\Delta(H) \geq \alpha(H)$.  Indeed, if~$I$ is an independent set of~$H$,
then~$\nu \in \R^{\inds{r}}$ given by~\eqref{eq:nu-def} is a feasible solution
of~\eqref{opt:kpb-primal} with objective value~$|I|$.

Actually, if~$\nu$ is a feasible solution of~\eqref{opt:lasserre-primal}, then
its restriction to~$\inds{r}$ is a feasible solution of~\eqref{opt:kpb-primal},
and so~$\kpbn(H) \geq \lassn(H)$.  Problem~\eqref{opt:kpb-primal} is related to
the $r$-point bound for the independence number of a graph~\cite{deLaat2021K-PointLines}.
As before, it is possible to define a converging hierarchy of problems by
allowing larger subsets, that is, by taking~$N_k$ and~$\inds{k}$ for~$k > r$
in~\eqref{opt:kpb-primal}.

The goal now is to extend~\eqref{opt:kpb-primal} to infinite hypergraphs on the
sphere. To this end, the dual problem is more suitable.
Problem~\eqref{opt:kpb-primal} is a conic programming problem (see the book by
Barvinok~\cite{Barvinok2002AConvexity} for background) in which the variable~$\nu$ is required
to belong to the cone
\[
  C \coloneqq \{\,\nu \in \R_+^{\inds{r}} : N_r\nu\text{ is positive semidefinite}\,\}.
\]
The dual cone is
\[
  C^* = \{\, N_r^* A : A \in \R^{\inds{1}^2 \times \inds{r-2}}\text{ is positive
  semidefinite}\,\} + \R_+^{\inds{r}},
\]
where~$N_r^*\colon \R^{\inds{1}^2 \times \inds{r-2}} \to \R^{\inds{r}}$ is the
adjoint of~$N_r$.  Explicitly, the adjoint is given by
\begin{equation}%
  \label{eq:N-adjoint}
  (N_r^*A)(I) = \sum_{Q \in \inds{r - 2}} \sum_{\substack{S, T \in \inds{1}\\Q
  \cup S \cup T = I}} A(S, T, Q)
\end{equation}
for~$I \in \inds{r}$.

The dual of~\eqref{opt:kpb-primal} is then
\begin{equation}%
  \label{opt:kpb-dual}
  \begin{optprob}
    \min&(N_r^* A)(\emptyset)\\
    &(N_r^*A)(\{x\}) \leq -1&\text{for all~$x \in V$,}\\
    &(N_r^*A)(S) \leq 0&\text{for all~$S \in \inds{r}$ with~$|S| \geq 2$,}\\
    &\onerow{A \in \R^{\inds{1}^2 \times \inds{r-2}}\text{ is positive
    semidefinite.}}
  \end{optprob}
\end{equation}
Any feasible solution of the dual problem provides an upper bound
for~$\alpha(H)$, as follows from the weak duality relation.


\subsection{Hypergraphs on the sphere}

The next step is to extend~\eqref{opt:kpb-dual} to hypergraphs on the sphere,
here focusing on hypergraphs representing the almost-equiangular-set problem ---
though the theory extends to $r$-uniform hypergraphs, under some assumptions.

For integer~$n \geq 2$ and~$t \in [-1, 1)$, let~$H = H(n, t)$ be the $3$-uniform hypergraph whose vertex set is~$S^{n-1}$
and in which three distinct points~$x$, $y$, and~$z$ form an edge if $t
\notin \{x\cdot y, x\cdot z, y\cdot z\}$.  Then the $t$-almost-equiangular sets are exactly the independent sets of~$H$, and so the goal is to compute
the independence number~$\alpha(H) = \alpha(n, t)$ of~$H$. 

To define an analogue of~\eqref{opt:kpb-dual} for the infinite hypergraph~$H$,
define the operator
\[
  B_3\colon C(\inds{1}^3) \to \bigoplus_{k=0}^3 C(\inds{=3})
\]
simply by copying~\eqref{eq:N-adjoint}, that is,
\[
  (B_3 A)(I) \coloneqq \sum_{\substack{S, T, Q \in \inds{1}\\S \cup T \cup Q = I}} A(S,
  T, Q)
\]
for~$I \in \inds{3}$.  This is the analogue of~$N_3^*$.  (See Section~\ref{sec:spaces-of-subsets} for background on the topology on the spaces of independent sets.)

Under the topology defined in Section~\ref{sec:spaces-of-subsets}, the operator~$B_3$ is well defined, that is, that~$B_3 A$ is continuous whenever~$A$ is
continuous.  A proof of this fact is simple but technical~\cite{Bekker2023OnGraphs}.

Next, say~$A \in C(\inds{1}^3)$ is \textit{positive semidefinite} if
the kernel~$(S, T) \mapsto A(S, T, Q)$ is positive semidefinite for every~$Q \in
\inds{1}$.  The extension of~\eqref{opt:kpb-dual} then is
\begin{equation}%
  \label{opt:sphere-kpb-dual}
  \begin{optprob}
    \min&(B_3 A)(\emptyset)\\
    &(B_3 A)(\{x\}) \leq -1&\text{for all~$x \in S^{n-1}$,}\\
    &(B_3 A)(S) \leq 0&\text{for all~$S \in \inds{3}$ with~$|S| \geq 2$,}\\
    &\onerow{A \in C(\inds{1}^3)\text{ is positive semidefinite.}}
  \end{optprob}
\end{equation}

\begin{theorem}
  If~$A$ is a feasible solution of~\eqref{opt:sphere-kpb-dual},
  then~$\alpha(H(n, t)) \leq (B_3 A)(\emptyset)$.
\end{theorem}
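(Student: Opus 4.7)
The plan is to translate the finite weak duality argument to the current infinite setting: given any finite independent set $I$ of $H(n,t)$, I would consider the scalar
\[
    \Sigma \coloneqq \sum_{\substack{S \in \inds{3}\\ S \subseteq I}} (B_3 A)(S),
\]
and derive the inequality $|I| \leq (B_3 A)(\emptyset)$ by bounding $\Sigma$ from below using positive semidefiniteness of $A$ and from above using the three groups of dual constraints in~\eqref{opt:sphere-kpb-dual}.

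For the lower bound, I would substitute the definition of $B_3 A$ into $\Sigma$ and swap the order of summation. Because $I$ is independent, any subset of $I$ of size at most~$3$ automatically lies in $\inds{3}$, so the condition $S_1 \cup S_2 \cup S_3 \in \inds{3}$ becomes redundant once one knows $S_i \subseteq I$ (and $|S_i| \leq 1$), and the sum collapses to
\[
    \Sigma = \sum_{S_1, S_2, S_3 \in \mathcal{J}} A(S_1, S_2, S_3),
\]
where $\mathcal{J} \coloneqq \{\emptyset\} \cup \{\{x\} : x \in I\}$ is a finite subset of $\inds{1}$. For each fixed $S_3 \in \mathcal{J}$, positive semidefiniteness of the kernel $(S, T) \mapsto A(S, T, S_3)$, applied to the finite principal submatrix indexed by $\mathcal{J} \times \mathcal{J}$, gives $\sum_{S_1, S_2 \in \mathcal{J}} A(S_1, S_2, S_3) \geq 0$ (it is the quadratic form evaluated at the all-ones vector). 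Summing over $S_3 \in \mathcal{J}$ yields $\Sigma \geq 0$.

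For the upper bound, I would split $\Sigma$ by the cardinality of $S$. The unique $S = \emptyset$ contributes exactly $(B_3 A)(\emptyset)$; each of the $|I|$ singletons $\{x\} \subseteq I$ contributes at most $-1$ by the second constraint of~\eqref{opt:sphere-kpb-dual}; and every $S \subseteq I$ with $|S| \in \{2, 3\}$ lies in $\inds{3}$ (again by independence of $I$) and contributes at most $0$ by the third constraint. Adding these gives $\Sigma \leq (B_3 A)(\emptyset) - |I|$, and combining with $\Sigma \geq 0$ yields $|I| \leq (B_3 A)(\emptyset)$; taking the supremum over finite independent sets $I$ gives $\alpha(H(n,t)) \leq (B_3 A)(\emptyset)$. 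There is no serious obstacle here: the argument is essentially routine weak duality applied to a finite subconfiguration, and the only point requiring care is that $\mathcal{J}$ is finite so that the definition of positive semidefiniteness as given in Section~\ref{sec:prelim} applies directly, without any measure-theoretic complications associated to the infinite hypergraph.
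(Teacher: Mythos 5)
Your proposal is correct and follows essentially the same argument as the paper: bound the sum of $(B_3 A)(J)$ over subsets $J \subseteq I$ of size at most three from below by $0$ using positive semidefiniteness of $A$ (after swapping the order of summation), and from above by $(B_3 A)(\emptyset) - |I|$ using the dual constraints. Your explicit remarks that subsets of an independent set are independent and that the positive-semidefiniteness definition applies to the finite index set $\mathcal{J}$ are just slightly more detailed versions of steps the paper leaves implicit.
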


\begin{proof}
Let~$I \subseteq S^{n-1}$ be an independent set of~$H(n, t)$.  On the one
hand,
\[
  \begin{split}
    \sum_{\substack{J \subseteq I\\|J| \leq 3}} (B_3 A)(J) &=
    \sum_{\substack{J \subseteq I\\|J| \leq 3}} \sum_{\substack{S, T, Q \in
    \inds{1}\\S \cup T \cup Q = J}} A(S, T, Q)\\
    &=\sum_{\substack{S, T, Q \subseteq I\\|S|, |T|, |Q| \leq 1}} A(S, T, Q)\\
    &\geq 0,
  \end{split}
\]
where the last inequality follows from~$A$ being positive semidefinite.

On the other hand,
\[
  \sum_{\substack{J \subseteq I\\|J| \leq 3}} (B_3 A)(J) =
  (B_3 A)(\emptyset) + \sum_{x \in I} (B_3 A)(\{x\}) + \sum_{\substack{J
  \subseteq I\\|J| \geq 2}} (B_3)(J)
  \leq (B_3 A)(\emptyset) - |I|,
\]
whence~$|I| \leq (B_3 A)(\emptyset)$.
\end{proof}


\subsection{Solving the optimization problem}

It simplifies notation to identify~$\inds{1}$ with~$\{\emptyset\} \cup S^{n-1}$, so below~$x \in \inds{1}$ is either~$\emptyset$ or an element
of~$S^{n-1}$.

The orthogonal group~$\ortho(n)$ acts on~$S^{n-1}$ by rotation.  Extend this
action to~$\inds{1}$ by acting trivially on~$\emptyset$.  A function~$A \in
\mc{C}(\inds{1}^3)$ is \defi{$\ortho(n)$-invariant} if
\[
  A(Tx, Ty, Tz) = A(x, y, z)
\]
for all~$x$, $y$, $z \in \inds{1}$ and~$T \in \ortho(n)$.  Invariance for other
functions and groups is similarly defined.

Any feasible solution of~\eqref{opt:sphere-kpb-dual}, and in particular any $\ortho(n)$-invariant feasible solution, gives an upper bound
for~$\alpha(H)$, where~$H = H(n, t)$.  Moreover, nothing is lost by
restricting~\eqref{opt:sphere-kpb-dual} to invariant solutions.  Indeed, every
rotation in~$\ortho(n)$ is an automorphism of~$H$.  It follows that, if~$A$ is a
feasible solution of~\eqref{opt:sphere-kpb-dual}, then
\[
  \overline{A}(x, y, z) \coloneqq \int_{\ortho(n)} A(Tx, Ty, Tz)\, d\mu(T),
\]
where~$\mu$ is the Haar probability measure on~$\ortho(n)$, is an $\ortho(n)$-invariant
feasible solution providing the same bound as~$A$.

Invariant positive-semidefinite functions in~$C(\inds{1}^3)$ can be parameterized
using spherical harmonics.  To see how, consider a positive-semidefinite $\ortho(n)$-invariant 
function~$A\colon \inds{1}^3 \to \R$.  For~$x$, $y \in \inds{1}$, the kernel
$K_\emptyset\colon \inds{1}^2 \to \R$ defined by
\[
  K_\emptyset(x, y) \coloneqq A(x, y, \emptyset)
\]
is positive semidefinite and $\ortho(n)$-invariant.

Now fix~$e \in S^{n-1}$ and let~$z \in S^{n-1}$.  There is~$T \in \ortho(n)$ such
that~$Tz = e$, so~$A(x, y, z) = A(Tx, Ty, e)$.  Let~$K_e\colon \inds{1}^2 \to
\R$ be the kernel such that
\[
  K_e(x, y) \coloneqq A(x, y, e).
\]
This kernel is positive semidefinite and invariant under the
\defi{stabilizer subgroup of~$e$}, namely the subgroup~$\stab{e}$ of~$\ortho(n)$
that fixes~$e$.

It follows that an $\ortho(n)$-invariant positive-semidefinite function~$A \in
C(\inds{1}^3)$ can be represented by two positive-semidefinite kernels
in~$C(\inds{1}^2)$, namely~$K_\emptyset$ and~$K_e$, the kernel~$K_\emptyset$
being $\ortho(n)$-invariant and the kernel~$K_e$ being $\stab{e}$-invariant.  The
correspondence is simply
\begin{align*}
  A(x, y, \emptyset) &= K_\emptyset(x, y)\quad\text{and}\\
  A(x, y, z) &= K_e(Tx, Ty),
\end{align*}
where~$T$ is any element of~$\ortho(n)$ such that~$Tz = e$.  It follows from the
invariance of~$K_e$ that~$A$ is well defined, since if~$T_1 z = T_2 z = e$,
then~$T_2 T_1^{-1} \in \stab{e}$ and $K_e(T_1 x, T_1 y) = K_e(T_2 x, T_2 y)$.

Schoenberg's theorem~\cite{Schoenberg1942PositiveSpheres} characterizes $\ortho(n)$-invariant
positive-semidefinite kernels on~$S^{n-1}$ in terms of 
Gegenbauer polynomials.  Similarly, a theorem of Bachoc and
Vallentin~\cite{Bachoc2008NewProgramming} characterizes $\stab{e}$-invariant
positive-semidefinite kernels on~$S^{n-1}$ using multivariate Gegenbauer
polynomials.  Both characterizations can be easily adapted to kernels
on~$\inds{1}$.  For this the following lemma is useful.

\begin{lemma}%
  \label{lem:psd-exp}
  Let~$V$ be a topological space,~$f_1$, \dots,~$f_N\colon V \to \R$ be
  continuous functions, and for~$x$, $y \in V$ consider the~$N \times N$ matrix
  such that
  \[
    Z(x, y)_{ij} \coloneqq f_i(x) f_j(y).
  \]
  If~$A \in \R^{N \times N}$ is positive semidefinite, then the kernel~$K\colon
  V^2 \to \R$ such that
  \[
    K(x, y) \coloneqq \langle A, Z(x, y)\rangle
  \]
  is positive semidefinite.
\end{lemma}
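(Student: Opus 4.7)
The plan is to unfold the definition of $K$ and reduce positive semidefiniteness of the kernel to positive semidefiniteness of the matrix $A$ applied to a single vector built from the $f_i$. Concretely, writing out the trace inner product gives
\[
    K(x, y) = \langle A, Z(x, y)\rangle = \sum_{i, j = 1}^N A_{ij} f_i(x) f_j(y),
\]
since $Z(x, y)_{ij} = f_i(x) f_j(y)$. Continuity of $K$ on $V^2$ is then immediate from continuity of the $f_i$ and the fact that the sum is finite.

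For positive semidefiniteness, I would fix an arbitrary finite subset $U \subseteq V$ and an arbitrary vector $c = (c_x)_{x \in U} \in \R^U$, and compute
\[
    \sum_{x, y \in U} c_x c_y K(x, y)
    = \sum_{i, j = 1}^N A_{ij} \Bigl(\sum_{x \in U} c_x f_i(x)\Bigr)\Bigl(\sum_{y \in U} c_y f_j(y)\Bigr)
    = v^\tp A v,
\]
where $v \in \R^N$ is defined by $v_i = \sum_{x \in U} c_x f_i(x)$. Since $A$ is positive semidefinite, $v^\tp A v \geq 0$, which shows that the matrix $\bigl(K(x, y)\bigr)_{x, y \in U}$ is positive semidefinite.

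There is essentially no obstacle here: the lemma amounts to the observation that $Z(x, y)$ is the rank-one outer product $f(x) f(y)^\tp$ of the vector-valued map $f = (f_1, \dots, f_N)^\tp$, so $K(x, y) = f(y)^\tp A f(x)$, and such a pullback of a positive-semidefinite bilinear form is automatically a positive-semidefinite kernel. The only care required is to keep indices straight when expanding the trace inner product, and to note that the finiteness of $N$ makes both the continuity argument and the interchange of sums trivial.
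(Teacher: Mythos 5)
Your proof is correct. It takes a slightly more direct route than the paper: you contract the test vector $c$ with the functions first, forming $v\in\R^N$ with $v_i=\sum_{x\in U}c_xf_i(x)$, and reduce the kernel's quadratic form to $v^\tp A v\geq 0$, i.e.\ you exploit directly that $Z(x,y)$ is the outer product $f(x)f(y)^\tp$. The paper instead works in the lifted space: it takes points $x_1,\dots,x_k$ and $u\in\R^k$, invokes positive semidefiniteness of the Kronecker product $A\otimes uu^\tp$, and evaluates its quadratic form at the vector $g(i,k)=f_i(x_k)$ indexed by $\{1,\dots,N\}\times\{1,\dots,k\}$. Both arguments rest on the same expansion of the trace inner product; yours buys a shorter, more elementary computation (no Kronecker product needed), while the paper's phrasing makes the tensor structure explicit, which is the pattern that recurs when such kernels are assembled from matrix blocks later in Section~3. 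Your added remark on continuity of $K$ is harmless and indeed immediate from finiteness of the sum.
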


\begin{proof}
Let~$x_1$, \dots,~$x_k \in V$ and take~$u \in \R^k$.  Since~$A$ is positive semidefinite, the matrix~$A \otimes u u^\tp$ is also positive semidefinite; its rows and columns are indexed by~$I = \{1, \ldots, N\} \times \{1, \ldots, k\}$.  Setting~$g(i, k) \coloneqq f_i(x_k)$ it follows that
\[
    \begin{split}
        \sum_{k,l=1}^k K(x_k, x_l) u_k u_l &= \sum_{k, l=1}^k u_k u_l \sum_{i,j=1}^N A_{ij} f_i(x_k) f_j(x_l)\\
        &\sum_{(i,k), (j, l) \in I} (A \otimes uu^\tp)_{(i,k), (j,l)} g(i, k) g(j, l)\\
        &\geq 0,
    \end{split}
\]
as wanted.
\end{proof}

Start with~$K_\emptyset$.  Let~$P_k^n$ denote the Jacobi polynomial of
degree~$k$ with parameters~$\alpha = \beta = (n-3)/2$ normalized so~$P_k^n(1) =
1$.  For~$k \geq 1$, let~$Z^\emptyset_k\colon \inds{1}^2 \to \R$ be such that
\[
  Z^\emptyset_k(x, y) \coloneqq \begin{cases}
    P_k^n(x\cdot y)&\text{if~$x$, $y \in S^{n-1}$;}\\
    0&\text{otherwise.}
  \end{cases}
\]
Let~$Z^\emptyset_0\colon \inds{1}^2 \to \R^{2\times 2}$ be such that, for~$x$,
$y \in S^{n-1}$,
\begin{align*}
    Z^\emptyset_0(\emptyset, \emptyset)& \coloneqq \smallpmatrix{1&0\\0&0},&
  Z^\emptyset_0(x, \emptyset)& \coloneqq \smallpmatrix{0&0\\1&0},\\
  Z^\emptyset_0(\emptyset, x)& \coloneqq \smallpmatrix{0&1\\0&0},&
  Z^\emptyset_0(x, y)& \coloneqq \smallpmatrix{0&0\\0&1}.
\end{align*}

It follows from the addition formula for Gegenbauer polynomials~\cite[\S9.6]{Andrews1999SpecialFunctions} that
for every~$k > 0$ the kernel~$(x, y) \mapsto Z^\emptyset_k(x, y)$ is positive
semidefinite.  From Lemma~\ref{lem:psd-exp} it follows that if~$A \in \R^{2\times 2}$ is positive semidefinite, then the kernel $(x, y) \mapsto \langle
A, Z^\emptyset_0(x, y)\rangle$ is positive semidefinite.  So, for every~$d \geq
0$, any kernel of the form
\begin{equation}%
  \label{eq:kempty-kernel}
  (x, y) \mapsto \langle A_0^\emptyset, Z^\emptyset_0(x, y)\rangle + \sum_{k=1}^d a_k
  Z^\emptyset_k(x, y)
\end{equation}
for positive-semidefinite~$A_0^\emptyset \in \R^{2 \times 2}$ and nonnegative numbers~$a_k$
is $\ortho(n)$-invariant and positive semidefinite.
Schoenberg~\cite{Schoenberg1942PositiveSpheres} showed the same for kernels on~$S^{n-1}$, the
only difference is that~$Z^\emptyset_0$ is then a single number. As in the case
of Schoenberg's theorem, it is possible to show that any $\ortho(n)$-invariant
and positive-semidefinite kernel in~$C(\inds{1}^2)$ can be written in the form
above with uniform convergence if one allows for an infinite series instead of a
finite sum.  Here, this full characterization is not needed.

Next consider~$K_e$.  Bachoc and Vallentin~\cite{Bachoc2008NewProgramming} define the
multivariate Gegenbauer polynomial, for~$k \geq 0$, as
\[
  Q_k^n(u, v, t) \coloneqq (1 - u^2)^{k/2} (1 - v^2)^{k/2} P_k^{n-1}\biggl(\frac{t - uv}{(1
    - u^2)^{1/2} (1 - v^2)^{1/2}}\biggr);
\]
this is a polynomial on~$u$, $v$, and~$t$ of degree~$2k$.

For~$k > 0$ and~$x$, $y \in S^{n-1}$, let~$Z^e_k(x, y)$ be the infinite matrix
indexed by integers~$i$, $j \geq 0$ such that
\[
  Z^e_k(x, y)_{ij} \coloneqq (e\cdot x)^i (e\cdot y)^j Q_k^n(e\cdot x, e\cdot y, x\cdot
  y).
\]
Note that this is a polynomial on~$e\cdot x$, $e\cdot y$, and~$x\cdot y$ of
degree~$i + j + 2k$.  If~$x = \emptyset$ or~$y = \emptyset$, set~$Z^e_k(x,
y)_{ij} \coloneqq 0$.

For integer~$i \geq 0$, let~$f_i\colon \inds{1} \to \R$ be such that
\[
  f_i(x) \coloneqq \begin{cases}
    0&\text{if~$x = \emptyset$;}\\
    (e\cdot x)^i&\text{otherwise.}
  \end{cases}
\]
Let~$f_\emptyset\colon \inds{1} \to \R$ be such that~$f_\emptyset(\emptyset) \coloneqq 1$
and~$f_\emptyset(x) \coloneqq 0$ if~$x \in S^{n-1}$. Define the infinite
matrix~$Z^e_0(x, y)$, indexed by~$U \coloneqq \{\emptyset\} \cup \{\, i \in \Z : i \geq
0\,\}$, by setting
\[
  Z^e_0(x, y)_{\alpha\beta} \coloneqq f_\alpha(x) f_\beta(y)
\]
for~$\alpha$, $\beta \in U$.

Let~$A$ be a positive-semidefinite matrix indexed by a finite set of nonnegative
integers.  For~$k > 0$, Bachoc and Vallentin~\cite{Bachoc2008NewProgramming} showed that the
kernel
\begin{equation}%
  \label{eq:bachoc-v-kernel}
  (x, y) \mapsto \langle A, Z^e_k(x, y)\rangle
\end{equation}
on~$S^{n-1}$ is positive semidefinite.  In the trace inner product in~\eqref{eq:bachoc-v-kernel}, the
matrix~$Z_k^e$ is truncated, that is, only the finite submatrix
corresponding to the rows and columns of~$A$ is considered.  From this it
immediately follows that the kernel~\eqref{eq:bachoc-v-kernel} is positive
semidefinite as a kernel over~$\inds{1}$ as well.

As for~$k = 0$, if~$A$ is a positive-semidefinite matrix indexed by a finite
subset of the index set~$U$, then the kernel $(x, y) \mapsto \langle A, Z^e_k(x,
y)\rangle$ is positive semidefinite, as follows directly from
Lemma~\ref{lem:psd-exp}.  So, if~$A^e_0$, \dots,~$A^e_d$ are positive-semidefinite
matrices, with~$A^e_0$ indexed by a subset of~$U$ and~$A^e_k$ indexed by a subset of
the nonnegative integers for~$k > 0$, then
\begin{equation}%
  \label{eq:ke-kernel}
  K_e(x, y) \coloneqq \sum_{k=0}^d \langle A^e_k, Z^e_k(x, y)\rangle
\end{equation}
is positive semidefinite and, by construction, $\stab{e}$-invariant.  Every
$\stab{e}$-invariant positive-semidefinite continuous kernel~$K_e$ can be
uniformly approximated by kernels with the above expression, see for example the appendix of~\cite{deLaat2019MomentProblems}.

With this, it is possible to express the function~$A \in C(\inds{1}^3)$
of~\eqref{opt:sphere-kpb-dual} in terms of polynomials.  Here,~$d$
in~\eqref{eq:kempty-kernel} and~\eqref{eq:ke-kernel} is fixed and the
matrices~$A_k$ in~\eqref{eq:ke-kernel} are truncated appropriately to bound the
total degree of the polynomials used.  The constraints
of~\eqref{opt:sphere-kpb-dual} are modeled as polynomial constraints using sums
of squares.  In this way,~\eqref{opt:sphere-kpb-dual} can be solved numerically
with the computer, and solutions can even be found analytically.  Both approaches are discussed in Section~\ref{sec:Bounds}.


\section{Upper bounds from the 3-point bound}%
\label{sec:Bounds}

As shown in Section~\ref{sec:optimization-bounds}, the bound~\eqref{opt:sphere-kpb-dual} can be expressed in terms of a polynomial optimization problem once~$d$ is fixed in~\eqref{eq:kempty-kernel} and~\eqref{eq:ke-kernel} and the~$Z_k^\emptyset$ and~$Z_k^e$ matrices are truncated to finite matrices.

\begin{figure}[t]%
    \centerfloat
    \includegraphics{./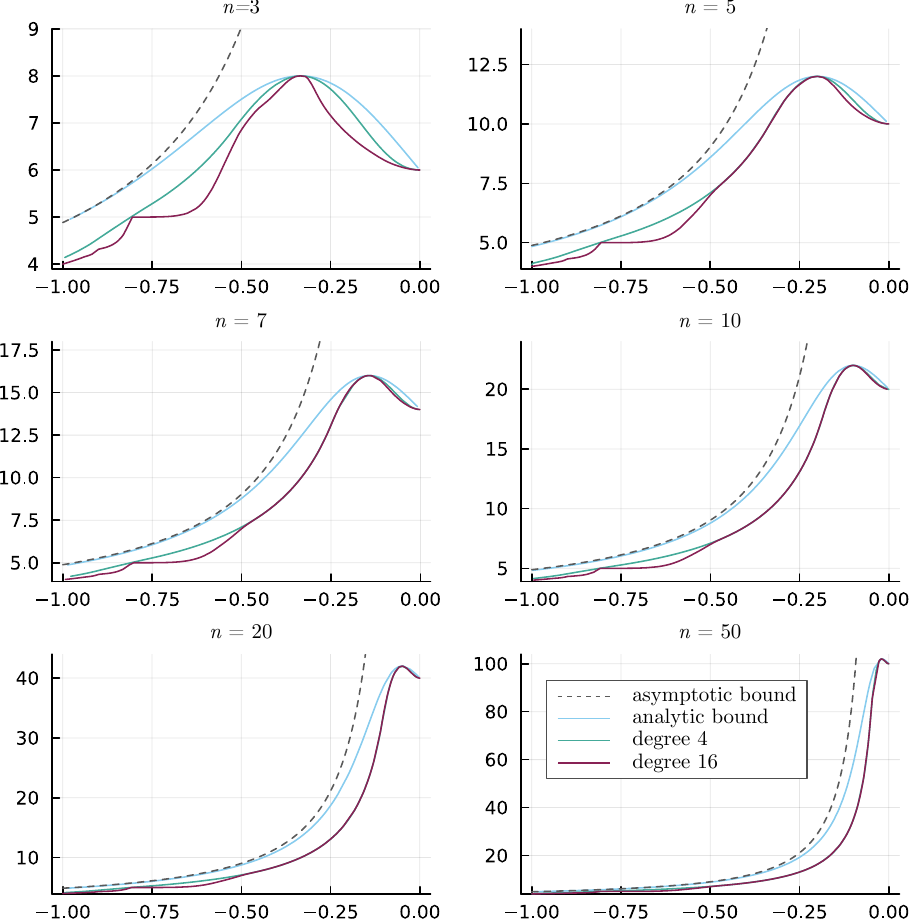}
    \caption{The numeric solutions of degree~$4$ and~$16$, and the analytic (Theorem~\ref{thm:interpolation}) and asymptotic solutions to the bound~\eqref{opt:sphere-kpb-dual}. The asymptotic bound is the limit as $n \to \infty$ of the analytic bound.}
    \label{fig:SolutionToBound}
\end{figure}

So implemented, the 3-point bound~\eqref{opt:sphere-kpb-dual} gives particularly good results for~$t \leq 0$.  Figure~\ref{fig:SolutionToBound} shows a plot of this bound as a function of~$t \in [-1, 0]$; the bound was computed by a Julia program using the package \texttt{ClusteredLowRankSolver.jl}~\cite{Leijenhorst2024SolvingOptimization}.  These are numerical results of very high precision that can be turned into rigorous results with some extra effort.  The Julia package \texttt{AlmostEquiangular.jl}, contained in the arXiv supplement to this paper, includes a function to compute the 3-point bound.

Using \texttt{ClusteredLowRankSolver.jl}~\cite{Leijenhorst2024SolvingOptimization} and its rounding routine~\cite{Cohn2024OptimalityBounds}, it is possible to obtain a rational analytic solution for fixed dimension~$n \geq 3$ and for inner products~$0$ and~$-1 / n$. At these points the bound is exactly equal to the maximum size of an almost-equiangular set. These solutions can then be interpolated to obtain a rational function in~$n$ and~$t$ that gives an upper bound for the size of a $t$-almost-equiangular set in~$S^{n-1}$ for~$t \in [-1, 0]$, leading to Theorem~\ref{thm:interpolation}.

A union of two disjoint regular $(n-1)$-simplices in~$S^{n-1}$ gives a $0$-almost-equiangular set with~$2n$ points; Rosenfeld~\cite{Rosenfeld1991AlmostEd} showed that this construction is optimal.  A union of two disjoint regular $n$-simplices in~$S^{n-1}$ gives a $(-1/n)$-almost-equiangular set with~$2(n+1)$ points; Bezdek and Lángi~\cite{Bezdek1999AlmostSd-1} showed that this construction is optimal.  The bound of Theorem~\ref{thm:interpolation} is sharp in both cases, providing a new proof of the optimality of these constructions.

\begin{proof}[Proof of Theorem~\ref{thm:interpolation}]
The proof of the theorem is by exhibiting a solution to the 3-point bound that has the objective value in the statement. To keep the solution as simple as possible, use a degree-$0$ kernel~$K_{\emptyset}$ and a degree-$4$ kernel~$K_e$. So the set of positive-semidefinite variables is~$A_0^{\emptyset}$ and~$A_k^e$ with~$0 \leq k \leq 2$.

Let
\[
    p \coloneqq \frac{8n^2t^4(2n-1) - 9n^2t^3(n-1) + (2nt^2 -3t + 4)(7n+1)}{2(1-t)(1+7n - 2n^2t^3(2n-1))}
\]
and
\begin{align*}
    &A_0^{\emptyset} \coloneqq 
    \begin{pmatrix}
        \frac{n(1 - t)^2}{2(nt^2 + 1)} p^2  & -\frac{1}{2}p \\ \mathbf{*} & \frac{nt^2 + 1}{2n(1 - t)^2}
    \end{pmatrix},\\
    &A_0^e \coloneqq
    \begin{pmatrix}
        p & -\frac{1}{4 n (1 - t)^2} -\frac{t^2}{(1 - t)^2} & \frac{3 t}{2 (1 - t)^2} & -\frac{3}{4 (1 - t)^2}\\
        \mathbf{*} & \frac{1}{4 n (n - 1) (1 - t)^3} - \frac{t^3}{2 (1 - t)^3} & \frac{3 t^2}{4 (1 - t)^3} & -\frac{n + 1}{8 n (n - 1) (1 - t)^3}\\
        \mathbf{*} & \mathbf{*} & -\frac{3 t}{2 (1 - t)^3} & 0\\
        \mathbf{*} & \mathbf{*} & \mathbf{*} & \frac{2 n - 1}{4 (n - 1) (1 - t)^3}
    \end{pmatrix},\\
    &A_1^e \coloneqq
    \begin{pmatrix}
        0 & 0\\ \mathbf{*} & \frac{n + 1}{2 n (1 - t)^3}
    \end{pmatrix},\\
    &A_2^e \coloneqq \begin{pmatrix}\frac{n-2}{4 n (n - 1) (1 - t)^3}\end{pmatrix}.
\end{align*}
The~$\mathbf{*}$s indicate that the entries are determined by the symmetry of the matrices.

All matrices above, except for~$A_0^e$, can be checked by hand to be positive semidefinite in the domain given by~$n \geq 3$ and~$t \in [-1, 0]$.  To check that~$A_0^e$ is positive semidefinite in the required domain, first decompose it as~$A_0^e = L D L^\tp$, where~$L$ and~$D$ are matrices of rational functions on~$n$ and~$t$ and~$D$ is diagonal, and then check that the diagonal entries of~$D$ are nonnegative in the domain.

These diagonal entries are rational functions, which can be rigorously checked to be nonnegative by a sum-of-squares approach.  The arXiv supplement to this paper contains the Julia package \texttt{AlmostEquiangular.jl}, which provides sum-of-squares certificates for the nonnegativity of the diagonal entries of~$D$.  The same package also provides a sum-of-squares certificate for the inequality~$f(n, t) \leq (16t-9)^2 / (128t^2)$. 

The Julia package in the supplement also checks that, for the corresponding function~$A \in C(\inds{1}^3)$,
\begin{align*}
    &B_3A(\{x\}) = -1,\\
    &B_3A(\{x, y\}) = 0 &&\text{ for all~$x \not= y$, and}\\
    &B_3A(\{x, y, z\}) = \frac{3(t - x \cdot z)(t - y \cdot z)(t - x \cdot y)}{(t - 1)^3} &&\text{ for all~$x$,~$y$, and~$z$ distinct.}   
\end{align*}
In particular, if~$t \in \{x \cdot z, y \cdot z, x \cdot y\}$, then~$B_3A(\{x, y, z\}) = 0$.
\end{proof}

The solution constructed in the proof above can in principle be improved; the issue is to get a good compromise between simplicity and quality.  For instance, by forcing some of the matrix entries to be zero as done above, it becomes possible to find a simple rational expression as given in the theorem.

\section{Realizability of anti-triangle-free graphs}
\label{sec:realizability}

A graph is \defi{anti-triangle free} if its complement is triangle free. This is equivalent to saying that every triple of vertices contains an edge.  The distance graphs of almost-equiangular sets are anti-triangle free and, conversely, realizable anti-triangle-free graphs give almost-equiangular sets.  Hence, to construct good almost-equiangular sets, one has to show that given anti-triangle-free graphs are realizable.

Recall the definition of realizability from Section~\ref{sec:prelim-geometry}.  The goal of this section is to determine whether certain anti-triangle-free graphs are~$(n, t)$-realizable. A construction of interest is the $(k,l)$-spindle, denoted by~$\spindle{k}{l}$ with~$k,l \geq 1$, defined later in this section, of which the Moser spindle is a special case.  In order to bound the inner products at which~$\spindle{k}{l}$ is realizable, and to offer some tools for other calculations, it is useful to derive realizability of some commonly appearing subgraphs of the spindle, namely the simplex and the rhombus.

\subsection{The simplex}%
\label{sec:simplex}
A nice reference for simplex geometry is  Fiedler~\cite{Fiedler2011MatricesGeometry}; see in particular Theorem~4.5.1 of this book for the following facts. The inner products of distinct vertices of a regular $n$-simplex inscribed in~$S^{n - 1}$ is~$-1 / n$. So~$\comp{n+1}$ is~$(n,t)$-realizable if and only if~$t = - 1/ n$.

If~$k < n$, then~$\comp{k + 1}$ is $(n, t)$-realizable if and only if~$t \geq - 1 / k$. Indeed, the circumradius of a regular $k$-simplex with inner product~$t$ is
\[
    r_k(t) \coloneqq \sqrt{\frac{(1 - t) k}{k + 1}}
\]
and the circumsphere of a $k$-simplex is a $(k - 1)$-sphere. For~$k < n$, the sphere~$S^{n-1}$ contains a $(k - 1)$-sphere of every radius less than or equal to~1, so~$K_{k+1}$ is $(n, t)$-realizable if and only if~$r_k(t) \leq 1$. This happens if and only if~$t \geq -1 / k$.

The~$k+1$ vertices of a regular $k$-simplex on~$S^{n-1}$ are by definition affinely independent, and so a regular~$k$-simplex contains at least~$k$ linearly independent points. If~$t = -1/k$, then~$r_k(t) = 1$, and the circumsphere is a great sphere, which lies on a linear subspace of dimension~$k$. However, if~$k < n$ and~$t > -1/k$, then~$r_k(t) < 1$, and so the linear span of the~$k$-simplex has dimension~$k+1$. In this case, the vertices of the $k$-simplex are linearly independent.  

\subsection{The rhombus}

A useful subgraph of a spindle is the union of two complete graphs on~$k+1$ vertices that have exactly~$k$ vertices in common.  This is the distance graph of a pair of regular $k$-simplices that share exactly one facet.  Alternatively, it is the complete graph~$\comp{k+2}$ with one edge removed.  Call this graph a \defi{$k$-rhombus}. By the previous paragraph, necessary conditions for realizability are~$k \leq n$,~$t = -1 / k$ if~$k = n$, and~$t \geq - 1 / k$ otherwise.

In what follows, let~$R$ be a~$k$-rhombus that is the union of two instances of~$\comp{k+1}$, denoted by~$\Sigma_1$ and~$\Sigma_2$, let~$e$ be the unique vertex of~$\Sigma_1 - V(\Sigma_2)$, let~$p$ be the unique vertex in~$\Sigma_2 - V(\Sigma_1)$, and let~$B = V(\Sigma_1) \cap V(\Sigma_2)$. Refer to~$R[B]$ as the \defi{base} of the rhombus. It is an instance of~$\comp{k}$. Up to orthogonal transformations, an~$(n, t)$-realization of~$R[B]$ is uniquely determined, so assume its vectors are known and denote the realization by~$B$ as well. The following lemma is comparable to~\cite[Lemma 7]{Balko2020Almost-EquidistantSets}.

\begin{lemma}%
\label{lem:kRhombusRealizable}
    With~$e$, $p$ as above, the~$k$-rhombus is~$(n,t)$-realizable if and only if~$k \leq n-1$ and~$t > -1 / k$. If these conditions hold, then~$e$ and~$p$ lie on an~$(n - k - 1)$-sphere of radius~$\sqrt{1 - 2kt^2/((k - 1)t + 1)}$. In particular, let~$k \leq n-1$,~$t > -1 / k$, and 
    \[
        \tau \coloneqq \frac{2 k t^2}{(k - 1) t + 1} - 1.
    \]
    If~$k < n - 1$, then~$e \cdot p \geq \tau$, and if~$k = n - 1$, then~$e \cdot p = \tau$.
    
    Conversely, if~$k < n-1$ and~$t' \in [\tau, 1)$, then there exists an~$(n, t)$-realization of the~$k$-rhombus in which~$e \cdot p = t'$.  If~$k = n - 1$, then the points~$e$ and~$p$ are uniquely determined.
\end{lemma}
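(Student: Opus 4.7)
The plan is to fix an $(n,t)$-realization of the base $B \cong \comp{k}$ on $S^{n-1}$ (unique up to orthogonal transformations) and then characterize the set of points $x \in S^{n-1}$ that extend $B$ to a regular $k$-simplex with inner product $t$. Such a base exists because the hypothesis $t > -1/k$ implies $t > -1/(k-1)$ and we have $k-1 \leq n-1$, so $\comp{k}$ is $(n,t)$-realizable by Section~\ref{sec:simplex}. Moreover, at $t > -1/(k-1)$ the Gram matrix of $B$ is positive definite, so the $b_i$ are linearly independent and their span $V$ has dimension $k$.

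The key observation is that any $x \in S^{n-1}$ satisfying $x \cdot b_i = t$ for all $i$ has its orthogonal projection $x_V$ onto $V$ uniquely determined by these linear equations. By the symmetry of the constraints, $x_V$ must be a scalar multiple of $b_1 + \cdots + b_k$, and inverting the Gram matrix yields $x_V = (t/((k-1)t+1))\sum_i b_i$ and $\|x_V\|^2 = kt^2/((k-1)t+1)$. Consequently, the set of valid $x$ is $x_V + \{\,y \in V^\perp : \|y\|^2 = 1 - \|x_V\|^2\,\}$, an $(n-k-1)$-sphere. Its squared radius $1 - kt^2/((k-1)t+1)$ is strictly positive precisely when $t > -1/k$ (the excluded value $t = -1/k$ collapses the sphere to the single point $x_V$, which cannot give two distinct $e$, $p$), and $V^\perp$ has positive dimension precisely when $k \leq n-1$. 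Together these give the necessary and sufficient conditions for realizability.

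With $e$ and $p$ both on this sphere and $r^2 = 1 - \|x_V\|^2$, I would then write $e \cdot p = \|x_V\|^2 + (e - x_V) \cdot (p - x_V)$ and note that the second term ranges over $[-r^2, r^2]$ as $(e-x_V)$ and $(p - x_V)$ vary among norm-$r$ vectors in $V^\perp$, and the upper endpoint is attained only when $e = p$. Substituting the expression for $\|x_V\|^2$ shows the lower bound equals $\tau$, so $e \cdot p \in [\tau, 1)$. When $k = n-1$ the subspace $V^\perp$ is a line, so the radius-$r$ sphere in $V^\perp$ is just two antipodal points; the distinct vectors $e$ and $p$ must be these two points, forcing $e \cdot p = \tau$. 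When $k < n-1$ the sphere in $V^\perp$ has positive dimension, so for any target $t' \in [\tau, 1)$ one can select two norm-$r$ vectors in $V^\perp$ whose inner product equals $t' - \|x_V\|^2 \in [-r^2, r^2)$, producing the desired realization.

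The only nontrivial computational step is deriving the explicit formulas for $x_V$ and $\|x_V\|^2$; once these are in hand, the remainder is a direct orthogonal-decomposition argument combined with elementary spherical geometry. A minor verification is that the upper endpoint $r^2$ for $(e-x_V)\cdot(p - x_V)$ is strictly excluded when $e \neq p$: equality in Cauchy--Schwarz for two equal-norm vectors in $V^\perp$ forces them to coincide, hence $e = p$.
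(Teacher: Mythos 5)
Your proposal is correct and follows essentially the same route as the paper: your common projection $x_V$ onto the span of the base is exactly the paper's shortest vector of the affine space $\{\,x : x\cdot b = t \text{ for all } b \in B\,\}$, the extension set is the same $(n-k-1)$-sphere of squared radius $1 - kt^2/((k-1)t+1)$, and $\tau$ comes from its diameter (your Cauchy--Schwarz decomposition versus the paper's $2r=\sqrt{2(1-\tau)}$). Note that your radius matches the paper's proof rather than the lemma statement, whose factor $2$ under the square root appears to be a typo.
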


\begin{proof}
The $k$-rhombus with base~$B$ has a subgraph isomorphic to~$\comp{k+1}$, and so necessary conditions for realizability are~$t \geq - 1/k$ and~$k \leq n$.  Assume that these hold.  If~$k = n$, the vectors in~$B$ already determine a full rank system, so then~$p$ will coincide with~$e$. Consequently, another necessary condition is~$k \leq n-1$.

Since~$t \geq -1 / k$, the~$k$-rhombus is realizable if and only if the affine space
\[
    A \coloneqq \{\, x \in \R^n : x \cdot b = t \text{ for all } b \in B\, \}
\]
intersects~$S^{n-1}$ in more than one point, that is, if and only if~$\inf{\{\|a\| : a \in A\}} < 1$; this infimum is attained in~$A$.

Let~$U$ be the linear span of~$B$ and let~$W$ be its orthogonal complement. Then the shortest vector~$a_0$ in~$A$ is in~$U$. Indeed, if~$a_0 = \sum_{b \in B} \lambda_b b + w \in A$ with~$w \in W$, then by orthogonality~$\|a_0\|^2 = \|\sum_{b \in B} \lambda_b b\|^2 + \|w\|^2$. Translating by a vector orthogonal to~$U$ does not change the inner product with any of the elements in~$B$. So, if~$\|a_0\|$ is minimal, then~$w = 0$.

All that is left is to calculate the coefficients~$\lambda_b$. Since~$A$ is convex,~$a_0$ is the unique shortest vector. Because~$b \cdot b' = t$ for all distinct~$b, b' \in B$, applying a permutation to the coefficients gives another vector in~$A$ with the same norm. By uniqueness, this forces all~$\lambda_b$ to have the same value~$\lambda$. For every~$b \in B$,
\[
    t = a_0 \cdot b = \lambda \sum_{b' \in B} b' \cdot b = \lambda((k - 1) t + 1),
\]
so~$\lambda = t / ((k - 1)t + 1)$ and
\[
    \|a_0\|^2 = \frac{kt^2}{(k - 1)t + 1}.
\]
Since~$t \geq -1 / k$, it follows that~$\|a_0\| < 1$ if and only if~$kt^2 - (k - 1)t - 1 < 0$. As a polynomial in~$t$ it has roots~$1$ and~$-1 / k$, so the~$k$-rhombus is realizable if and only if~$-1 / k < t < 1$.

The intersection of~$A$ with~$S^{n-1}$ gives an~$(n - k - 1)$-sphere~$S$ whose radius~$r$ is~$\sqrt{1 - \|a_0\|^2} = \sqrt{1 - kt^2 / ((k - 1)t + 1)}$.  Any two distinct points on~$S$ are valid realizations of~$p$ and~$e$.  If~$\tau$ is the minimum possible inner product between points on~$S$, then~$2r = \sqrt{2(1 - \tau)}$, so
\[
    \tau = 1 - 2r^2 = \frac{2kt^2}{(k - 1)t + 1} - 1.\qedhere
\]
\end{proof}

In Lemma~\ref{lem:kRhombusRealizable}, the inner product~$t$ does not depend on the embedding dimension, something that happens often for these types of constructions.

\subsection{The spindle}%
\label{sec:SpindleProof}

The \defi{$(k,l)$-spindle} can be described as follows: let~$R_1$ be a $k$-rhombus; say~$e$ and~$p_1$ are the vertices of its unique nonedge. Attach at~$e$ an $l$-rhombus~$R_2$ with nonedge~$e p_2$, so~$V(R_1) \cap V(R_2) = \{e\}$. Finally, add the edge~$p_1p_2$. Figure~\ref{fig:spindles} shows several spindles. For~$k \geq 1$, the spindle~$\spindle{k}{k}$ is called the \defi{$k$-Moser spindle}, denoted by~$\MS{k}$. If~$k \leq l$, then~$\spindle{k}{l} \subseteq \MS{l}$.

\begin{figure}[t]
    \centering
    \includegraphics{./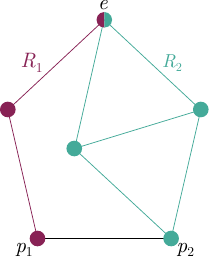}\qquad
    \includegraphics{./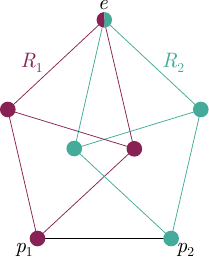}\qquad
    \includegraphics{./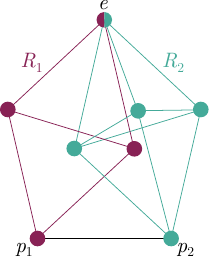}
    \caption{From left to right:~$\spindle{1}{2}$,~$\MS{2}$, and~$\spindle{2}{3}$. The~$R_i$ are indicated with their respective colors.}
    \label{fig:spindles}
\end{figure}

The~$(k,l)$-spindle is an anti-triangle-free graph of order~$k + l + 3$. The Moser spindle in particular is a well-studied object. For example, the spindle~$\MS{n-1}$ was already pointed out by Bezdek and Lángi~\cite{Bezdek1999AlmostSd-1} as a $t$-almost-equiangular set for~$t$ close to~1. However, they did not attempt to calculate for which~$t$ the graph is realizable, and did not consider the case of negative~$t$ or~$k \neq l$.  This is done in the following theorem.

\begin{theorem}%
\label{thm:RealizabilityOfSpindles}
    If~$k \geq 1$ and~$i \in \{1,2,3\}$, then all roots~$t_{k,i}$ of the polynomial
    \[
    8k^2t^3 - (k^2 -10k + 1)t^2 - 2(k - 1)t - 1
    \]
    with respect to~$t$ are real and can be ordered such that~$t_{k, 1} \leq t_{k, 2} < 0 < t_{k, 3}$.  The~$(k,l)$-spindle is~$(n,t)$-realizable if and only if~$k, l \leq n - 1$ and~$t \in [-1, 1)$ satisfy
    \begin{align}
        \label{eq:spindle-a} &t = (-1/4)(1 \pm \sqrt{5}) &&\text{ if } n = 2\text{ and }k = l = 1,\\
        \label{eq:spindle-b} &t_{k, 1} \leq t \leq t_{k, 2}\text{\quad or\quad} t_{k , 3} \leq t &&\text{ if } n > 2 \text{ and }k = l = n - 1,\\
        \label{eq:spindle-c} &t_{k,1} \leq t&&\text{ if }n > 2\text{ and } k = l < n - 1,\text{ or}\\
        \label{eq:spindle-d} &-1/l < t &&\text{ if } n > 2\text{ and }k < l \leq n - 1.
    \end{align}
\end{theorem}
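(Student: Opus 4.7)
My plan is to realize the spindle by fixing $e \in S^{n-1}$, embedding the two rhombi $R_1$ and $R_2$ so that they share only the vertex $e$, and then determining when the remaining edge $p_1 p_2$ can be realized with inner product $t$. The necessary conditions $k, l \leq n-1$ and $t > -1/\max(k,l)$ come immediately from applying Lemma~\ref{lem:kRhombusRealizable} to each rhombus individually. The case $n = 2$, $k = l = 1$ can then be handled by direct parameterization on $S^1$: with $e = (1,0)$ and $\cos \theta = t$, the rhombus structure forces $p_1 = (\cos 2\theta, \sin 2\theta)$ and $p_2 = (\cos 2\theta, -\sin 2\theta)$, so the condition $p_1 \cdot p_2 = \cos 4\theta = t$ yields a degree-$4$ polynomial whose only nondegenerate roots are $(-1 \pm \sqrt{5})/4$ (the root $t = -1/2$ is spurious because it forces $p_1 = b_2$).

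The heart of the proof is case (b), $k = l = n - 1$. Here Lemma~\ref{lem:kRhombusRealizable} forces each sphere $\sigma_i$ to be a $0$-sphere, so $p_i$ is rigidly determined by $B_i$; explicitly one checks $p_i = 2 u_i - e$, where $u_i$ is the orthogonal projection of $e$ onto $\operatorname{span}(B_i)$ and is a scalar multiple of the centroid $\bar{b}_i$ of $B_i$. Expanding $p_1 \cdot p_2$ gives a linear expression in $\bar{b}_1 \cdot \bar{b}_2$, and imposing $p_1 \cdot p_2 = t$ fixes the required value of $\bar{b}_1 \cdot \bar{b}_2$. Decomposing $\bar{b}_i = t e + \nu \hat{c}_i$ with $\hat{c}_i$ a unit vector in $e^\perp$ and $\nu^2 = ((n-2)t + 1 - (n-1)t^2)/(n-1)$, the achievable range is $\bar{b}_1 \cdot \bar{b}_2 \in [t^2 - \nu^2, t^2 + \nu^2]$. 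The upper bound is automatic throughout the rhombus range; the lower bound, after clearing denominators, factors as $(t - 1) P_{n-1}(t) \leq 0$, which for $t < 1$ is equivalent to $P_{n-1}(t) \geq 0$, recovering exactly condition~\eqref{eq:spindle-b}.

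For cases (c) and (d) the spheres $\sigma_i$ have positive dimension and $p_i$ gains extra freedom. In case (c), $k = l < n - 1$, a case-(b) realization in $\R^{k+1}$ already covers $t \in [t_{k,1}, t_{k,2}] \cup [t_{k,3}, 1)$ after embedding into $\R^n$; the case-(b) forbidden gap $(t_{k,2}, t_{k,3})$ is then filled using the extra dimension, for instance by parameterizing a symmetric family of configurations indexed by the common hinge angle $s = e \cdot p_i \in [\tau_k, 1)$ and the relative orientation $\hat{c}_1 \cdot \hat{c}_2$ and invoking the intermediate value theorem for $p_1 \cdot p_2$. In case (d), $k < l$, the smaller rhombus is strictly more flexible than the larger one: after realizing $R_2$ (which needs only $t > -1/l$), the remaining freedom in $R_1$ lets one close $p_1 p_2$ at inner product $t$. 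The structural claim that $P_k$ has three real roots with the stated ordering $t_{k,1} \leq t_{k,2} < 0 < t_{k,3}$ follows by computing $P_k(0) = -1$, $P_k(-1) = -(3k-2)^2$, $P_k(1) = k(7k+8) > 0$, and $P_k(-1/k) = -1/k^2$, then applying the intermediate value theorem together with the boundary behavior of a cubic with positive leading coefficient.

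The main obstacle will be cases (c) and (d). Case (b) reduces cleanly to linear algebra, but case (c) requires a careful explanation of why the extra dimension exactly fills the gap $(t_{k,2}, t_{k,3})$—presumably through a continuity argument or an explicit construction—and case (d) is trickier because the loss of symmetry when $k \neq l$ means the centroid-based bookkeeping of case (b) no longer yields a single clean polynomial condition; instead, a more flexible geometric argument, placing $R_1$ in directions largely orthogonal to $R_2$ and tuning one degree of freedom to satisfy $p_1 \cdot p_2 = t$, is needed to extract the threshold $t > -1/l$.
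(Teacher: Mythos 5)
Your setup (share the apex $e$, use Lemma~\ref{lem:kRhombusRealizable} to describe the possible positions of $p_1$ and $p_2$, and ask when $p_1\cdot p_2=t$ is achievable) is the same skeleton as the paper's proof, and your cases (a) and (b) are sound: the circle computation for $n=2$ is correct, and in case (b) your formula $p_i=2u_i-e$ with $u_i$ the projection of $e$ onto the span of the base is right, the centroid decomposition gives the achievable range of $\bar b_1\cdot\bar b_2$, and clearing denominators in the condition for the lower endpoint does indeed produce $(t-1)$ times the cubic~\eqref{eq:MoserSpindlePolynomial}, recovering~\eqref{eq:spindle-b}. (The paper gets the same inequality $E\leq t$, with $E=2\tau^2-1$, via Lemma~\ref{lem:MaxDistanceWhenOpposite} rather than via centroids, so this part is a mild variation, not a different proof.)

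The genuine gaps are exactly where you flag difficulty, but they are not small finishing touches; they are the content of conditions~\eqref{eq:spindle-c} and~\eqref{eq:spindle-d}. In case (c), for a fixed $t$ in the gap $(t_{k,2},t_{k,3})$ the achievable values of $p_1\cdot p_2$ form the interval $[\xi,1]$ with $\xi=2\tau^2-1$ when $\tau>0$ and $\xi=-1$ when $\tau\leq 0$ (where $\tau=2kt^2/((k-1)t+1)-1$); so no intermediate-value or continuity argument can "fill the gap'' unless you prove that $\tau(t)\leq 0$, equivalently $2kt^2-(k-1)t-1\leq 0$, holds on all of $(t_{k,2},t_{k,3})$. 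That is a root-interlacing statement between this quadratic and the cubic~\eqref{eq:MoserSpindlePolynomial}, which the paper proves by a polynomial-remainder computation (one can also note that at the roots of the quadratic one has $E=-1<t$, hence the cubic is positive there); your sketch contains no such verification. Case (d) is in worse shape: near the threshold $t\downarrow -1/l$ (say with $l=n-1$, so $R_2$ is rigid) one has $\tau_2\to 1$, so $p_2$ is pinned near $e$ and the antipodal mechanism fails; realizability there rests on the inequality $\tau_1\tau_2-\sqrt{(1-\tau_1^2)(1-\tau_2^2)}\leq t$, which is a nontrivial quartic condition in $t$. The paper handles it by reducing to $l=k+1$ via $\spindle{k}{l}\subseteq\spindle{l-1}{l}$ and then ordering the roots of that quartic against those of a cubic; "placing $R_1$ largely orthogonal to $R_2$ and tuning one degree of freedom'' does not substitute for this computation. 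Finally, your justification that the cubic has three real roots ordered $t_{k,1}\leq t_{k,2}<0<t_{k,3}$ does not work as stated: $P_k(-1)$, $P_k(-1/k)$, and $P_k(0)$ are all negative, so your evaluations give only the positive root by IVT; you need the discriminant (as in the paper) or a positive value at some negative argument to get the two negative roots.
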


The following simple lemma does a lot of the work in the proof of Theorem~\ref{thm:RealizabilityOfSpindles}.
\begin{lemma}
    If~$S_1$ and~$S_2$ are subsets of~$S^{n-1}$ that are invariant under the subgroup of~$\ortho(n)$ that stabilizes a point~$e$ and if $\inf{\{\, x \cdot y : x \in S_1, y \in S_2\, \}}$ is attained by points~$p_1 \in S_1$ and~$p_2 \in S_2$, then~$e$, $p_1$, and~$p_2$ lie on a great circle~$C$. Moreover, if~$f \in C$ is orthogonal to~$e$ and if~$p_1$, $p_2 \neq \pm e$, then~$f\cdot p_1$ and~$f \cdot p_2$ have opposite signs.
    \label{lem:MaxDistanceWhenOpposite}
\end{lemma}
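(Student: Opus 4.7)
The plan is to leverage the rotational symmetry of $S_1$ and $S_2$ about the axis through $e$. Decompose each minimizer as $p_i = a_i e + c_i$ with $a_i = e \cdot p_i$ and $c_i \in e^\perp$, so that $p_1 \cdot p_2 = a_1 a_2 + c_1 \cdot c_2$. For any $T \in \stab{e}$, invariance of $S_1$ and $S_2$ gives $T p_1 \in S_1$ and $T p_2 \in S_2$, and since $T$ fixes $e$ one has $e \cdot T p_i = a_i$. The minimality of the inner product at the pair $(p_1, p_2)$ then forces $T c_1 \cdot c_2 \geq c_1 \cdot c_2$ and $c_1 \cdot T c_2 \geq c_1 \cdot c_2$ for every $T \in \stab{e}$.

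The next step is to use that $\stab{e}$ acts on $e^\perp$ as the full orthogonal group $\ortho(n-1)$ and, in particular, acts transitively on the unit sphere of $e^\perp$. Assuming $p_1, p_2 \neq \pm e$, both $c_1$ and $c_2$ are nonzero, so as $T$ varies the vector $T c_1$ sweeps out the entire sphere of radius $\|c_1\|$ in $e^\perp$; the inequality $T c_1 \cdot c_2 \geq c_1 \cdot c_2$ therefore forces $c_1 \cdot c_2 = -\|c_1\|\|c_2\|$, whence $c_1$ and $c_2$ are antiparallel. Consequently, $e$, $p_1$, and $p_2$ all lie in the two-dimensional subspace $\mathrm{span}(e, c_2)$, whose intersection with $S^{n-1}$ is the required great circle $C$. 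Taking $f \coloneqq c_2/\|c_2\| \in C$, one computes $f \cdot p_1 = -\|c_1\| < 0$ and $f \cdot p_2 = \|c_2\| > 0$; replacing $f$ by $-f$ flips both signs but preserves the opposite-signs conclusion.

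The main, and really the only, obstacle is the edge case $p_i = \pm e$, for which the transitivity argument collapses because $c_i$ vanishes. This case is handled directly: if $p_1 = \pm e$, then $e$, $p_1$, and $p_2$ already lie in the at-most-two-dimensional subspace $\mathrm{span}(e, p_2)$, so a great circle through all three still exists, and the second assertion of the lemma is excluded by hypothesis. Beyond this edge case, the only nontrivial ingredient is the standard identification of $\stab{e}$ restricted to $e^\perp$ with $\ortho(n-1)$, which provides the transitive action on unit vectors needed to upgrade the one-parameter family of inequalities into the rigid antiparallel condition.
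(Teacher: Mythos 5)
Your proof is correct and follows essentially the same route as the paper: both exploit invariance under the stabilizer of $e$ together with minimality to force the components of $p_1$ and $p_2$ orthogonal to $e$ to be antiparallel, giving coplanarity with $e$ and the opposite-sign conclusion. The only cosmetic difference is that you derive antiparallelism from the equality case of Cauchy--Schwarz over the full orbit, whereas the paper compares against one explicitly chosen rotated point $p_2' = \lambda e - \sqrt{1-\lambda^2}\,f$; the substance is the same, and your handling of the edge case $p_i = \pm e$ matches the paper's.
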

\begin{proof}
    If~$p_1$ or~$p_2$ is~$\pm e$, then the result is clear.  So assume~$p_1$, $p_2 \neq \pm e$.

    Let~$U \coloneqq \Span\{ e, p_1 \}$ and let~$f$ be a unit vector in~$U$ orthogonal to~$e$ such that~$f \cdot p_1 > 0$. Write~$p_1 = \alpha e + \beta f$ and~$p_2 = \lambda e  + \kappa f + w$ with~$w \in U^{\perp}$, so~$|\kappa| \leq \sqrt{1 - \lambda^2}$.  By invariance under the stabilizer of~$e$, any point~$p_2'$ on the sphere with~$p_2 \cdot e = p_2'\cdot e$ is also in~$S_2$. Let~$p_2' = \lambda e - \sqrt{1-\lambda^2}f \in S_2$. Then
    \[
        p_1 \cdot p_2' = \alpha \lambda - \beta \sqrt{1 - \lambda^2} \leq \alpha \lambda + \beta \kappa = p_1 \cdot p_2.
    \]    
    It follows that~$w = 0$ and that~$f \cdot p_2 = \kappa < 0$, as wanted.
\end{proof}

\begin{proof}[Proof of Theorem~\ref{thm:RealizabilityOfSpindles}]
Let~$n \geq 2$ and~$1 \leq k \leq l \leq n - 1$ be integers and let~$t \in (- 1 / l, 1)$. A~$(k, l)$-spindle contains the union of a~$k$- and an~$l$-rhombus that intersect in a single point. Let~$R_1$ be the~$k$-rhombus with unique nonedge~$e p_1$ and~$R_2$ the~$l$-rhombus with unique nonedge~$e p_2$, so~$V(R_1) \cap V(R_2) = \{ e \}$. A necessary and sufficient condition for realizability is that there are realizations of~$R_1$ and~$R_2$ such that~$p_1 \cdot p_2 = t$.

Let~$S_i$ be the set of all possible images of~$p_i$ under $(n, t)$-realizations of~$R_i$ that map~$e$ to the north pole~$(1, 0, \ldots, 0)$, that is,
\[
    S_i \coloneqq \{\, f(p_i) : f \text{ is an~$(n, t)$-realization of } R_i\text{ such that }f(e) = (1, 0, \ldots, 0)\, \}.
\]
Let
\begin{equation}%
\label{eq:tau-formula}
    \tau_1 = \frac{2 k t^2}{(k - 1) t + 1} - 1\quad\text{and}\quad \tau_2 = \frac{2 l t^2}{(l - 1) t + 1} - 1.
\end{equation}
If~$n > 2$, then Lemma~\ref{lem:kRhombusRealizable} guarantees the existence of an~$(n, t)$-realization of~$R_1$ with~$e \cdot p_1 = \tau_1$. By rotating the realization,~$e$ can be placed at the north pole. If~$k < n - 1$, the lemma similarly guarantees the existence of an~$(n,t)$-realization of~$R_1$ with~$e \cdot p_1 = t'$ for all~$t'\in [\tau_1, 1)$ with~$e$ at the north pole. This goes through analogously for~$R_2$. Since the action of the stabilizer of~$e$ in~$\ortho(n)$ is transitive on the set of points~$p$ that have inner product~$t'$ with~$e$ for all~$t' \in [-1, 1]$, this shows that if~$k = n - 1$ or~$l = n - 1$, the corresponding~$S_i$ is
\[
    S_i = \{\, p \in S^{n-1} : e \cdot p = \tau_i\}
\]
and if~$k < n - 1$ or~$l < n - 1$, the corresponding~$S_i$ is
\[
    S_i = \{\, p \in S^{n-1} : e \cdot p \in [\tau_i, 1)\, \}.
\]
In particular, they are invariant under the stabilizer of~$e$ in~$\ortho(n)$.

Furthermore, if~$k \leq l \leq n-1$ and~$t > -1/l$, then~$\tau_1 \leq \tau_2$ for fixed~$t$, so that~$S_2 \subseteq S_1$.  It follows that there is~$\xi$ such that
\[
\{\, p \cdot q : p \in S_1,\ q \in S_2\,\} = [\xi, 1].
\]

Note that~$\xi$ is a function of~$k$, $l$, and~$t$.  Given~$n > 2$ and~$k$ and~$l$, it is then enough to find the values of~$t$ for which~$\xi \leq t$.  Let~$q_1 \in S_1$ and~$q_2 \in S_2$ be such that~$\xi = q_1 \cdot q_2$.  The goal is then to have~$q_1 \cdot q_2 \leq t$.  The following simple fact will be useful:
\begin{equation}%
\label{ass:MaximumDistanceOnUnitCircle}
\assert{If~$S_1$ and~$S_2$ are arcs of the unit circle~$S^1$ such that the infimum $\inf{\{\, x \cdot y : x \in S_1, y \in S_2\, \}}$ is attained, then the infimum is attained by an antipodal pair or by endpoints of the arcs.}
\end{equation}
By Lemma~\ref{lem:MaxDistanceWhenOpposite} it can be assumed that~$q_1$,~$q_2$, and~$e$ all lie on the same great circle~$C$.  By~\eqref{ass:MaximumDistanceOnUnitCircle}, either the~$q_i$ are endpoints of~$S_i \cap C$ or they are antipodal.

If the~$q_i$ are endpoints, then~$e \cdot q_i = \tau_i$.  Using Lemma~\ref{lem:MaxDistanceWhenOpposite} again gives
\[
    E \coloneqq q_1 \cdot q_2 = \tau_1 \tau_2 - \sqrt{1 - \tau_{\smash{1}}^{\smash{2}}} \sqrt{1 - \smash{\tau_2}^{\smash{2}}}.
\]
Hence, in this case the spindle is $(n, t)$-realizable if and only if~$E \leq t$.

The~$q_i$ are antipodal only if~$\tau_1 \leq -\tau_2$.  In this case, the spindle is $(n, t)$-realizable.  This gives necessary and sufficient conditions for realizability in the~$n > 2$ case.

If~$n = 2$, then~$k = l = 1$.  The sets~$S_i$ then each contain only two choices for~$q_i$ such that~$e \cdot q_i = \tau_i$. A necessary and sufficient condition for realizability is then that~$q_1 \cdot q_2 = t$.

To summarize, necessary and sufficient conditions for $(n, t)$-realizability of the $(k,l)$-spindle are:
\begin{enumerate}
\item[(i)] $E = t$\quad if~$n = 2$;

\item[(ii)] $E \leq t$\quad if~$n > 2$ and~$k = l = n - 1$;

\item[(iii)] $E \leq t$\quad or\quad $\tau_1 \leq -\tau_2$\quad otherwise.
\end{enumerate}

Recall from~\eqref{eq:tau-formula} that the~$\tau_i$ are functions of~$k$, $l$, and~$t$, and hence so is~$E$.  The goal is now to determine, for each case above, the values of~$t$ for which the conditions hold.

In most of the cases below, one has~$k = l$.  Then~$\tau_1 = \tau_2 \eqqcolon \tau$, and so
\[
    E  = 2\tau^2 - 1.
\]
Plug~\eqref{eq:tau-formula} into the right-hand side above to see that~$E \leq t$ if and only if
\begin{equation}
    8k^2t^3 - (k^2 -10k + 1)t^2 - 2(k - 1)t - 1
    \label{eq:MoserSpindlePolynomial}
\end{equation}
is nonnegative, with equality when~$t$ is a root of the polynomial.  In what follows, this and other polynomials considered are seen as polynomials on~$t$ only, that is,~$k$ is fixed.
\medbreak

\noindent
{\sc Case}~(i).  If~$n = 2$, then~$k = l = 1$, and there are only two values of~$t$ for which~$\MS{1}$ is realizable. To see this, factor the polynomial~\eqref{eq:MoserSpindlePolynomial} as
\[
    8t^3 + 8 t^2 - 1 = (2t + 1) (4t^2 + 2t -1).
\]
For the root~$t = -1 / 2$, the points~$p_1$ and~$p_2$ coincide with other points in the spindle. The other roots are~$t = -(1/4)(1 \pm \sqrt{5})$.  These inner products correspond to the pentagon and pentagram.  This gives~\eqref{eq:spindle-a}.
\medbreak

\noindent
{\sc Case}~(ii). If~$n > 2$ and~$k = l = n - 1$, then~(ii) is satisfied if and only if the polynomial~\eqref{eq:MoserSpindlePolynomial} has a nonnegative value at~$t$.  Its discriminant is positive so it only has real roots. Denote them by~$t_{k,1} \leq t_{k,2} \leq t_{k,3}$.  The constant and linear terms are negative, so~$t_{k,1} \leq t_{k,2} < 0 < t_{k,3}$. At~$t = 0$ the polynomial is negative, thus the polynomial must be nonnegative for~$t_{k,1} \leq t \leq t_{k,2}$ and~$t \geq t_{k,3}$. So~$\MS{n - 1}$ is realizable if and only if~$t_{k,1} \leq t \leq t_{k,2}$ or~$t \geq t_{k,3}$.  This establishes~\eqref{eq:spindle-b}.
\medbreak

\noindent
{\sc Case}~(iii).  It remains to consider~$n > 2$ and~$l < n - 1$.  The discussion splits into two cases: (a).~$k = l$ and (b).~$k < l$.
\medbreak

\noindent
{\sl Case (a).} If~$k = l < n - 1$, either one of the conditions in~(iii) has to be satisfied. The first one is again equivalent to finding~$t$ such that the polynomial~\eqref{eq:MoserSpindlePolynomial} is nonnegative, and so a sufficient condition for realizability is~$t_{k,1} \leq t \leq t_{k,2}$ or $t \geq t_{k,3}$.

The second condition is~$\tau_1 \leq -\tau_2$.  Since~$\tau_1 = \tau_2 \eqqcolon \tau$ one has~$\tau \leq 0$.  From~\eqref{eq:tau-formula}, this happens if and only if~$g \coloneqq 2kt^2 - (k - 1)t - 1 \leq 0$. This polynomial has a positive and a negative root and is negative at~$0$. At both roots,~\eqref{eq:MoserSpindlePolynomial} is positive. This can be tested by taking the remainder of~\eqref{eq:MoserSpindlePolynomial} after division by~$g$, and testing it at a convenient value smaller then the smallest root of~$g$ (for example~$t = -1/k$), since the remainder is linear and increasing in~$t$. So~$\MS{k}$ with~$k < n - 1$ is realizable if and only if~$t_{k,1} \leq t$.  This establishes~\eqref{eq:spindle-c}.
\medbreak

\noindent
{\sl Case (b).} The final case is~$n > 2$ and~$k < l \leq n - 1$.  It turns out that it suffices to consider the case~$l = k + 1$, as will be seen later.  

So assume~$l = k + 1$.  Let
\[
    f \coloneqq 8k^2 \left( k - 1 \right) t^4 - \left( k^3 - 19k^2 + 8k + 4 \right) t^3 - k \left( 3k - 14 \right) t^2 - 3 \left( k - 1 \right) t - 1.
\]
The inequality~$E \leq t$ is satisfied if and only if~$f \geq 0$.

If~$k=1$, then~$f$ is of degree~$3$.  Computing its roots, one gets conditions for the inequality above to be satisfied, obtaining a set of values of~$t$ for which the spindle is realizable.  Similarly, the condition~$\tau_1 \leq -\tau_2$ is satisfied if and only if~$t^3 + 3t^2 - t - 1 \leq 0$.  This gives another set of values of~$t$ for which the spindle is realizable.  Taking the union of both sets, one gets the condition~$t > -1/2 = -1/l$ for realizability.

If~$k > 1$, then~$f$ has degree~$4$ and its discriminant is negative, so it has exactly two real roots~$f_1 \leq f_2$. At~$t = 0$ it is negative and at~$t = 1$ and~$t = - 1 / (k + 1)$ it is positive, hence~$-1/(k+1) < f_1 < 0 < f_2 < 1$ and~$f \geq 0$ for~$-1/(k + 1) < t \leq f_1$ and~$f_2 \leq t < 1$.

The condition~$\tau_1 \leq -\tau_2$ is equivalent to the condition
\[
    g \coloneqq \left(2 k^2-1\right) t^3 - \left(k^2 - 3k - 1\right) t^2 - (2k - 1)t - 1 \leq 0.
\]
By an analysis similar as before, this polynomial has three real roots~$g_1 \leq g_2 < 0 < g_3$. It is negative at~$0$, so it is nonpositive for all~$t$ such that~$t \leq g_1$ or~$g_2 \leq t \leq g_3$. The next objective is to show that in fact~$g_2 \leq f_1 \leq f_2 \leq g_3$, so that the result follows; see Figure~\ref{fig:order-of-roots}.
\begin{figure}[tb]
    \centerfloat
    \includegraphics{./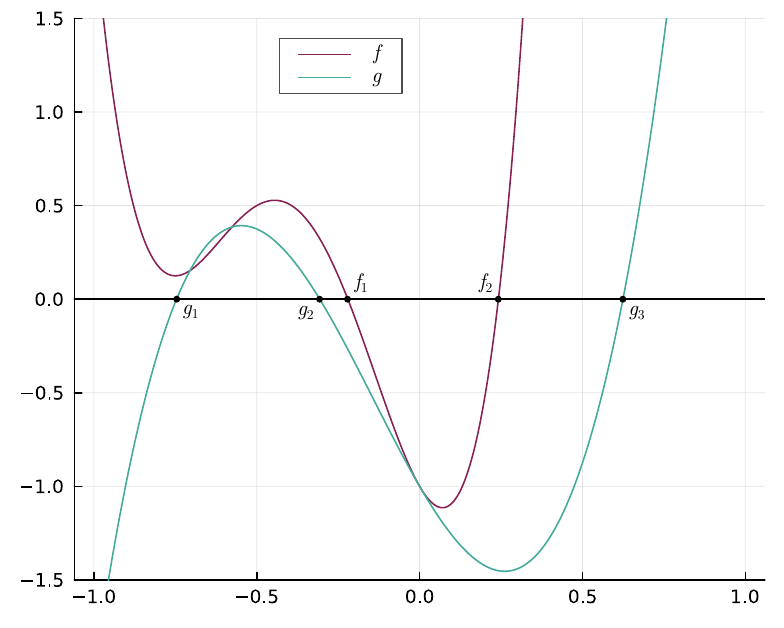}
    \caption{A plot of~$f$ and~$g$ for~$k = 2$. The horizontal axis is the inner product~$t$. Clearly~$-1/2 < g_2 \leq f_1 < 0 < f_2 \leq g_3$. If~$n > 2$, then~$\spindle{1}{2}$ is~$(n, t)$-realizable if and only if~$t > -1/2$, and~$f \geq 0$ or~$g \leq 0$. This plot shows that it is~$(n, t)$-realizable if and only if~$t > -1/2$.}
    \label{fig:order-of-roots}
\end{figure}

To determine the order of the roots~$f_1$,~$f_2$,~$g_1$,~$g_2$, and~$g_3$, take the remainder~$r$ of~$f$ after division by~$g$.  The remainder has degree~2 and has two real roots; denote the roots of~$r$ by~$r_1 \leq r_2$. Then~$f$ is nonnegative at a~$g_i$ if and only if~$r$ is. Both roots of~$r$ are negative for any~$k \geq 2$. Moreover,~$g$ is positive at~$r_1$ and~$r_2$, so they lie between~$g_1$ and~$g_2$. The coefficient of the quadratic term of~$r$ is positive, so it has a global minimum, meaning it is positive for all~$t > r_2 > g_1$, so~$f$ is positive at~$g_2$ and~$g_3$. This determines the order of the roots~$g_2 \leq f_1 < 0 < f_2 \leq g_3$. The spindle is realizable if~$-1/(k + 1) < t \leq f_1$,~$f_2 \leq t \leq 1$ and~$g_2 \leq t \leq g_3$, so putting all of this together,~$\spindle{k}{k+1}$ is realizable if and only if~$-1/(k + 1) < t < 1$.

From the case~$l = k + 1$ all other cases follow. Indeed,~$\spindle{k}{l}$ with~$k < l - 1$ is a subgraph of~$\spindle{l - 1}{l}$, and so a sufficient condition for realizability is~$t > - 1/l$, which was already seen to be necessary. This settles~\eqref{eq:spindle-d}.
\end{proof}

\subsection{Some results on non-realizability}
\label{subsec:NonRealizableGraphs}

In order to classify almost-equiangular sets in low dimension, it is necessary to show that given anti-triangle-free graphs are not $(n, t)$-realizable for certain~$n$ and~$t$.

\subsubsection{The extended rhombus}

Let~$t = - 1/ n$ and take two~$(n - 1)$-rhombi,~$R_1$ and~$R_2$, that intersect in an induced subgraph~$\Sigma$ isomorphic to~$\comp{n}$ (see Figure~\ref{fig:ExtendedRhombus}). Call this graph an \defi{extended~$(n-1)$-rhombus}. Let~$e$ and~$p$ be the endpoints of the unique nonedge of~$R_1$ with~$p \in \Sigma$. Let~$f$ be the endpoint of the nonedge of~$R_2$ not contained in~$\Sigma$.

If~$t = -1/n$ and~$k =  n - 1$, then by Lemma~\ref{lem:kRhombusRealizable},~$e \cdot p = -1 / n$ in any realization of~$R_1$. So a realization of~$R_1$ actually forms an~$n$-simplex, and analogously the same holds for~$R_2$. But then~$e$ and~$f$ are uniquely determined by~$\Sigma$, and must coincide, hence the extended $(n-1)$-rhombus is not $(n, -1/n)$-realizable.

\begin{figure}[bt]
    \centering
    \includegraphics{./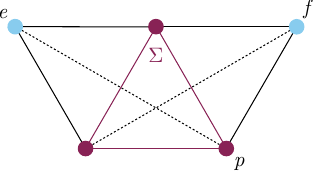}\qquad
    \includegraphics{./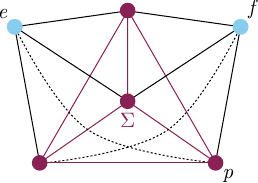}
    \caption{The extended~2-rhombus on the left and the extended~3-rhombus on the right. The dotted lines are edges that follow from Lemma~\ref{lem:kRhombusRealizable}, forcing~$f$ to coincide with~$e$.}
    \label{fig:ExtendedRhombus}
\end{figure}

\subsubsection{The complement of the split $k$-cycle}

\begin{figure}[bt]
    \centering
    \includegraphics{./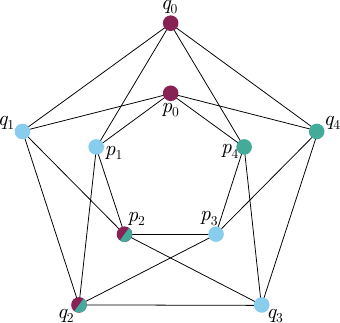}\qquad
    \includegraphics{./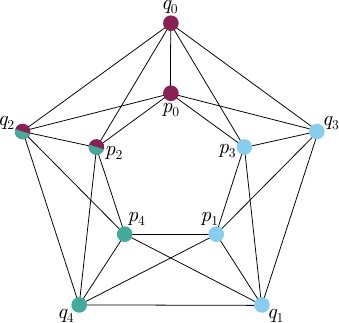}
    \caption{On the left the graph~$W_5$ with~$\Sigma_0$ and~$\Sigma_2$ indicated by color. In this case,~$T_2 = \{p_2, q_2\}$. On the right~$\overline{W_5}$ after rearranging the vertices, with~$\Sigma_0$ and~$\Sigma_2$ colored as well.  The similarity between the two graphs is incidental for~$k = 5$.}
    \label{fig:W5}
\end{figure}

Let~$k \geq 4$. The \defi{split $k$-cycle} is the graph~$W_k$ on vertices~$p_0,\ldots, p_{k-1}, q_0,\ldots, q_{k-1}$ in which the neighborhood of both~$p_i$ and~$q_i$ is~$\{p_{i-1},q_{i-1},p_{i+1},q_{i+1}\}$ with all indices modulo~$k$ (see Figure~\ref{fig:W5}). It is obtained from a~$k$-cycle by splitting each vertex. Deaett proved~\cite[Theorem 4.11]{Deaett2011TheGraph} that the graph~$\overline{W_n}$, the complement of~$W_n$, is~$(n,0)$-realizable.

For even~$k$, the graph~$W_k$ is bipartite with parts of size~$k$, since the set of all even-indexed points is independent and so is its complement. This means that~$\overline{W_k}$ is a union of two~$(k - 1)$-simplices with some extra edges and therefore does not give a new construction.

For~$k = 5$, the graph~$\overline{W_5}$ is~$(5,0)$-realizable (see Figure~\ref{fig:W5}).  It is the smallest example of an optimal~$(n, 0)$-realizable anti-triangle-free graph that is not a union of two~$(n - 1)$-simplices~\cite{Deaett2011TheGraph}. Balko, Pór, Scheucher, Swanepoel, and Valtr showed~\cite[Theorem 2]{Balko2020Almost-EquidistantSets} that~$\overline{W_5}$ cannot be embedded in~$\R^3$ so that adjacent vertices are at distance~$1$.  Since there are $(4, -1/4)$-realizable graphs of order~$10$, a priori~$\overline{W_5}$ could be $(4, -1 / 4)$-realizable. It turns out, however, that~$\overline{W_k}$ with odd~$k \geq 5$ is not $(k-1, t)$-realizable for any negative~$t$.

Indeed, take~$W_{k}$ with odd~$k \geq 5$. The optimization bound (Theorem~\ref{thm:interpolation}) shows that if~$n < k$ and~$t \in [-1, 0]$, then the maximum cardinality of a $t$-almost-equiangular set on~$S^{n-1}$ is~$\leq 2(n+1)$, with equality only at~$t = -1/n$. Since~$\overline{W_k}$ has order~$2k \geq 2(n+1)$, it can only be $(n,t)$-realizable for~$n < k$ when~$n = k - 1$ and~$t = -1/n$.

Hence the goal is to show that~$\overline{W_k}$ is not $(n, -1/n)$-realizable with~$n = k - 1$.  So assume that~$\overline{W_k}$ is realizable.

In what follows, indices are taken modulo~$k$.  Let~$\Sigma_i$ be the set of all vertices~$p_{i+2j}$ and~$q_{i+2j}$ for~$0 \leq j \leq (k-3)/2$ and set~$T_i\coloneqq \Sigma_{i-2} \cap \Sigma_i$ (see Figure~\ref{fig:W5}).

The~$\Sigma_i$ are independent in~$W_k$ and so form~$(k - 2)$-simplices in a realization of~$\overline{W_k}$. Take the sets~$\Sigma_0$ and~$\Sigma_2$. Then~$\Sigma_0 \setminus T_2$ and~$\Sigma_2 \setminus T_2$ both consist of two points that lie in the intersection of hyperplanes defined by the equations~$l \cdot x = t$ for all~$l \in T_2$. The realization of~$T_2$ is a~$(k - 4)$-simplex, so by Section~\ref{sec:simplex},~$T_2$ consists of~$k - 3$ linearly independent vectors and the dimension of the intersection of these hyperplanes is~2. Therefore,~$p_0$,~$q_0$,~$p_{k-1}$, and~$q_{k-1}$ are coplanar and lie on a circle~$C_1$. Repeat this for~$\Sigma_1$ and~$\Sigma_3$ to see that~$p_0$,~$q_0$,~$p_1$, and~$q_1$ are also coplanar and lie on a circle~$C_2$.

Since~$K \coloneqq \{p_1, q_1, p_{k-1}, q_{k-1}\}$ is a clique in~$\overline{W_k}$, it defines a regular tetrahedron, hence its affine span is 3-dimensional, and the circles~$C_1$ and~$C_2$ are distinct. Denote the circumsphere of~$K$ by~$S$, which is a~2-sphere. The affine span of~$\{p_0, q_0, p_1, q_1, p_{k-1}, q_{k-1}\}$ is also 3-dimensional, since these points lie on two distinct planes intersecting on a line. Then~$p_0$, $q_0 \in \Aff K$. By uniqueness of the circumsphere of a simplex this means~$p_0$ and~$q_0$ also lie on~$S$.

Since~$\Sigma_0$ can be completed to a regular~$(k - 1)$-simplex for~$t = -1/(k - 1)$ by adding a point on~$z \in C_1$, it follows that~$C_1$ is a circumcircle of a regular triangle on~$S$ whose vertices are~$z$, $p_{k-1}$, and~$q_{k-1}$. However, there are only two such regular triangles on~$S$, namely~$\{p_{k-1}, q_{k-1}, p_1\}$ and~$\{p_{k-1}, q_{k-1}, q_1\}$. So~$C_1$ contains~$p_0$,~$q_0$,~$p_{k-1}$,~$q_{k-1}$ and either~$p_1$ or~$q_1$. By a similar argument,~$C_2$ contains~$p_0$,~$q_0$,~$p_1$,~$q_1$ and either~$p_{k - 1}$ or~$q_{k - 1}$. Then~$C_1$ intersects~$C_2$ in at least four points, a contradiction.

 \section{Maximum obtuse almost-equiangular sets}
 \label{sec:properties-and-uniqueness}

Theorem~\ref{thm:maximal-almost-equiangular-sets} below establishes that~$\alpha(n, t) \leq 2(n+1)$ for all~$t \leq 0$, with equality only for~$t = -1/n$.  This motivates calling a $(-1/n)$-almost-equiangular set with~$2(n+1)$ points a \defi{maximum obtuse almost-equiangular set}.

The proof of Theorem~\ref{thm:maximal-almost-equiangular-sets} follows a spectral analysis of matrices associated to the Gram matrix of such a set, done by Rosenfeld~\cite{Rosenfeld1991AlmostEd} and Bezdek and Lángi~\cite{Bezdek1999AlmostSd-1}. Further analysis of these matrices gives useful properties of maximum obtuse almost-equiangular sets; they turn out to be spherical $2$-designs, and are in bijection with certain symmetric orthogonal matrices.  

Finally, this leads to a proof that the only maximum obtuse almost-equiangular set is the double regular $n$-simplex for~$n = 2$, \dots,~$5$.

\subsection{The spectral analysis}

Bezdek and Lángi prove in~\cite{Bezdek1999AlmostSd-1} that a $t$-almost-equiangular subset of~$S^{n-1}$ with~$t \leq 0$ cannot have more that~$2(n+1)$ points,  by  analyzing the eigenvalues of a certain matrix related to the  Gram matrix of the set. Their method is revisited here to strengthen their result as 
follows.

\begin{theorem}\label{thm:maximal-almost-equiangular-sets}
If~$t \in [-1, 0]$, then~$\alpha(n, t) \leq 2(n+1)$, with equality only at~$t = -1/n$. The Gram matrix of a maximum obtuse almost-equiangular set has rank~$n$, its only nonzero eigenvalue is $2(1+1/n)$, and the all-ones vector~$e$ is in its kernel. In particular, the barycenter of a maximum obtuse almost-equiangular set is~$0$.
\end{theorem}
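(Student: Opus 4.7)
The plan is to adapt the spectral argument of Rosenfeld, as extended by Bezdek and L\'angi, and push it further to pin down the equality case exactly. Let $G$ be the Gram matrix of a $t$-almost-equiangular set $\{x_1,\ldots,x_m\} \subseteq S^{n-1}$ with $t \in [-1, 0]$, and consider the auxiliary matrix $A = G - tJ$, where $J = ee^\tp$ is the all-ones matrix. Since $G$ is positive semidefinite and $-tJ$ is positive semidefinite for $t \leq 0$, so is $A$; moreover $\rank A \leq n+1$, all diagonal entries of $A$ equal $1-t$, and its off-diagonal nonzero support forms a triangle-free graph on $\{1,\ldots,m\}$, because for every triple of points in the set some pair has inner product $t$.

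The key structural consequence of triangle-freeness is that whenever $A_{ij} \neq 0$ with $i \neq j$, one has $A_{ik}A_{jk} = 0$ for every $k \neq i, j$, and hence $(A^2)_{ij} = 2(1-t) A_{ij}$. I would then compute $\trace(A^3) = \sum_{i,j} A_{ij}(A^2)_{ij}$ by splitting according to whether $A_{ij}$ is zero or not, which produces the trace identity $\trace(A^3) = 3(1-t)\trace(A^2) - 2m(1-t)^3$. Applying the inequality $\lambda(\lambda - 2(1-t))^2 \geq 0$ to each eigenvalue of $A$ and summing yields $\trace(A^2) \leq 2m(1-t)^2$, which combined with the Cauchy--Schwarz lower bound $\trace(A^2) \geq (\trace A)^2/\rank A \geq m^2(1-t)^2/(n+1)$ gives the bound $m \leq 2(n+1)$.

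For the equality case, both inequalities must be tight: all nonzero eigenvalues of $A$ must coincide, and every eigenvalue must lie in $\{0, 2(1-t)\}$. This forces the spectrum of $A$ to consist of $0$ and $2(1-t)$ each with multiplicity $n+1$, so in particular $A^2 = 2(1-t)A$ and $\rank A = n+1$. Since $\rank G \leq n$, one deduces $\rank G = n$ and $e \in \col A$, whence $Ae = 2(1-t) e$ and $Ge = (tm + 2(1-t)) e = (2 + 2tn) e$. Setting $\mu = 2 + 2tn$ and using $GJ = JG = \mu J$ together with $J^2 = mJ$, the identity $A^2 = 2(1-t) A$ expands to $G^2 = 2(1-t) G + t\mu J$. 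Restricting to $e^\perp$ kills the $J$ term and forces the eigenvalues of $G$ there to lie in $\{0, 2(1-t)\}$; a trace computation then shows $2(1-t)$ has multiplicity exactly $n$ on $e^\perp$. The rank of $G$ is therefore $n$ if $\mu = 0$ and $n+1$ otherwise, so $\rank G = n$ forces $\mu = 0$, which is equivalent to $t = -1/n$. The remaining claims --- that the only nonzero eigenvalue of $G$ is $2(1+1/n)$, that $e$ lies in the kernel, and, since $\{x_i\}$ then spans $\R^n$, that the barycenter vanishes --- follow immediately.

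The main obstacle is finding the right polynomial inequality that caps $\trace(A^2)$. Once one notices that triangle-freeness gives $(A^2)_{ij} = 2(1-t) A_{ij}$ on the support of $A$, the polynomial $\lambda(\lambda - 2(1-t))^2$ is the natural choice, since it is nonnegative on the positive semiaxis and vanishes precisely on the spectrum $\{0, 2(1-t)\}$ realized by the double regular simplex. The subsequent extraction of the fine structure of $G$ from $A^2 = 2(1-t) A$ via the decomposition into $\mathrm{span}(e)$ and $e^\perp$ is routine, but one has to carefully verify that $e$ belongs to the column space of $A$ when the rank is maximal in order to conclude that it is an eigenvector of $A$ with eigenvalue $2(1-t)$.
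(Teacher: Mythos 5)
Your proposal is correct, and while it rests on the same spectral data as the paper's proof (the shifted Gram matrix $C=G-tJ$, which is positive semidefinite for $t\le 0$ and has rank at most $n+1$, together with the cubic trace condition coming from triangle-freeness of the support), the way you extract the cardinality bound is genuinely different. The paper follows Bezdek and L\'angi: it passes to $B=C-(1-t)I$, notes $\trace B=\trace B^3=0$, pins $N-(n+1)$ eigenvalues of $B$ at $-(1-t)$ via the rank bound, and then solves an explicit constrained optimization problem over the remaining $n+1$ eigenvalues by a Lagrangian argument, whose unique optimum $y_i=1$ gives $N\le 2(n+1)$ and the exact spectrum in the equality case. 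You instead work with $C$ itself and combine two certificates: the pointwise inequality $\lambda(\lambda-2(1-t))^2\ge 0$ on the (nonnegative) spectrum, which together with your trace identity $\trace(C^3)=3(1-t)\trace(C^2)-2m(1-t)^3$ caps $\trace(C^2)$, and the Cauchy--Schwarz/rank bound $\trace(C^2)\ge(\trace C)^2/(n+1)$. This is essentially Deaett's style of argument (which the paper cites for the $t=0$ case) extended to all $t\le 0$; in effect your polynomial $(\lambda)(\lambda-2(1-t))^2$ plays the role of the paper's certificate $(y-1)^2(y+2)\ge 0$ hidden inside its Lagrangian, but the bookkeeping with the rank is done through Cauchy--Schwarz rather than by pinning eigenvalues. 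Your equality analysis is also organized differently but reaches the same conclusions: the paper shows $e\in\col C$, deduces $Ue=0$ and $\rank U=n$ directly, and reads off $t=-1/n$ from $0=Ue=(2(1-t)+2(n+1)t)e$, whereas you first force $\rank G=n$ and $Ge=(2+2tn)e$ by a dimension count, then expand $C^2=2(1-t)C$ into $G^2=2(1-t)G+t\mu J$, restrict to $e^\perp$ and use a trace count to force $\mu=0$; this is slightly more roundabout (once $Ge=\mu e$ and $e\notin\col G$ are in hand, $\mu\neq 0$ would immediately put $e$ in the column space of $G$), but it is valid. What your route buys is an elementary, certificate-style proof with no optimization machinery; what the paper's route buys is a formulation whose equality analysis reads the full eigenvalue multiplicities of $B$ straight off the uniqueness of the optimizer.
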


\begin{proof}
Following~\cite{Bezdek1999AlmostSd-1}, let $U$ be the Gram matrix of a $t$-almost-equiangular subset of $S^{n-1}$ of cardinality $N$,  let 
$C=U-t J$,  and $B=U-t J-(1-t)I$,  where $J$ is the all-ones matrix and~$I$ is the identity matrix. The diagonal coefficients of~$B$ are~$0$,  hence~$\trace B = 0$.  The coefficients of~$B$ corresponding to pairs of points with inner product~$t$  are equal to~$0$, hence the set being almost equiangular translates to $B_{ij}B_{jk}B_{ki}=0$ for all $1\leq i,j,k\leq N$, whence~$\trace(B^3)=0$.

These two properties give rise to equations for the eigenvalues of $B$.  Because $\rank C \leq n+1$,  the matrix $B$ has at least  $N-(n+1)$ eigenvalues equal to $-(1-t)$.  If $\lambda_1$, \dots,~$\lambda_{n+1}$ denote the remaining ones, then
\[
\sum_{i=1}^{n+1} \lambda_i = (N-n-1)(1-t)\quad\text{and}\quad
\sum_{i=1}^ {n+1} \lambda_i^3 =(N-n-1)(1-t)^3.
\]

Since~$t < 0$, the matrix~$C$ is positive semidefinite, and so the smallest eigenvalue of~$B$ is~$-(1-t)$.  Hence if $y_i=\lambda_i/(1-t)$, then~$y_i \geq -1 > -\sqrt{3}$ and the problem
\begin{equation}%
\label{opt:eigs}
\begin{optprob}
z^* \coloneqq \max&\sum_{i=1}^{n+1} y_i\\
&\sum_{i=1}^{n+1} y_i - y_i^3 = 0,\\
&y_i \geq -\sqrt{3}\quad\text{for~$i = 1$, \dots,~$n + 1$}
\end{optprob}
\end{equation}
gives an upper bound for~$N - n - 1$.

Let
\[
    L(y) \coloneqq \sum_{i=1}^{n+1} y_i + (1/2)\biggl(\sum_{i=1}^{n+1} y_i - y_i^3\biggr)
    = \sum_{i=1}^{n+1} (3/2) y_i - (1/2) y_i^3
\]
and
\begin{equation}%
\label{opt:lag}
    d^* \coloneqq \max\{\, L(y) : \text{$y_i \geq -\sqrt{3}$ for all~$i$}\,\}.
\end{equation}
If~\eqref{opt:lag} has an optimal solution~$y^*$ that is feasible for~\eqref{opt:eigs}, then it is also optimal for~\eqref{opt:eigs}. Conversely, if~$z^* = d^*$ and~$y^*$ is optimal for~\eqref{opt:eigs} then it is optimal for~\eqref{opt:lag}.

In an optimal solution of~\eqref{opt:lag} all the~$y_i$ have the same value, namely
\[
    \max\{\, p(y) : y \geq -\sqrt{3}\, \},
\]
where~$p(y) \coloneqq (3/2)y - (1/2)y^3$.  A boundary and critical point analysis on~$p$ shows it has a unique maximum for~$y \geq -\sqrt{3}$ given by~$p(1) = 1$.

Therefore, the problem~\eqref{opt:lag} has a unique optimal solution~$y^*$ with~$y^*_i = 1$ for all~$i$, and its optimal value is~$n + 1$. Since~$y^*$ is also feasible for~\eqref{opt:eigs}, it is its unique optimal solution with optimal value~$n + 1$. So, $N\leq 2(n+1)$, and equality holds if and only if the matrix $B$ has exactly  $n+1$ eigenvalues equal to $1-t$ and $n+1$ eigenvalues equal to $-(1-t)$.  It follows that if~$N = 2(n+1)$, then~$C$ has exactly one nonzero eigenvalue, namely~$2(1-t)$ with multiplicity~$n+1$.

Assume that~$N = 2(n+1)$, so the set attains the maximum cardinality. Then, the all-ones vector~$e$ is in the kernel of~$U$, and~$U$ has rank~$n$. Indeed,~$\rank U \leq n < n+1 = \rank C$, and since~$C = U - tJ$ it follows that~$e$ is not in the column space of~$U$, so~$e$ is in the column space of~$C$. The column space~$E$ of~$C$ is the eigenspace of~$C$ with eigenvalue~$2(1-t)$. Let~$S \subseteq E$ be the orthogonal complement to the span of~$e$ in~$E$. If~$x \in S$, then
\[
    Ux = Cx + tJx = Cx = 2(1-t)x,
\]
hence~$x$ is an eigenvector of~$U$. Since~$\rank U < \rank C$ it follows that~$S$ is the only eigenspace of~$U$ with nonzero eigenvalue. Hence,~$Ue = 0$ and~$U$ has rank~$n$.

The equation~$Ue = 0$ means that the barycenter of the set is~$0$. Moreover, from~$0 = Ue = (C + tJ)e = (2(1-t) + 2(n+1)t)e$ it follows that~$t = -1/n$.
\end{proof}

By the continuous dependence of eigenvalues on the coordinates of a matrix, the bound~$\alpha(n, t) \leq 2(n+1)$ can be extended to~$[-1, \varepsilon(n))$, where~$\varepsilon(n)$ is some (small) positive number depending on~$n$, something Bezdek and Lángi already showed. However, it is not true that this bound is global on~$t \in [-1,1)$, as a construction of Larman and Rogers~\cite{Larman1972TheSpace} shows. Namely, let~$n=5$ and~$S$ be the set of vertices of the cube~$[-1, 1]^5$ that have an odd number of positive signs. Then~$|S| = 16$ with vectors of norm~$\sqrt{5}$.  Rescaling by~$\sqrt{5}$ gives a $(1/5)$-almost-equiangular set on~$S^4$ of cardinality~$16$.

The proof of Theorem~\ref{thm:maximal-almost-equiangular-sets} also links the maximum obtuse almost-equiangular sets to the theory of spherical designs (see the survey by Bannai and Bannai~\cite{Bannai2009ASpheres} for more on spherical designs).

\begin{proof}[Proof of Theorem~\ref{cor:optimal-construction-are-2-designs}]
    According to Theorem~\ref{thm:maximal-almost-equiangular-sets}, $\sum_{i=1}^{2(n+1)} x_i=0$, the Gram matrix~$U$ of~$S$ satisfies $U^2=2(1+1/n)U$, and~$U$ has rank~$n$.  Moreover, the identity $U^2=2(1+1/n)U$ translates to
\begin{equation*}
\sum_{k=1}^{2(n+1)}  (x_i\cdot x_k)(x_k \cdot x_j) =2(1+1/n)(x_i\cdot x_j) \quad\text{for all }1\leq i,j\leq 2(n+1).
\end{equation*}
By linearity, $x_i$ and $x_j$ can be replaced by any vector of $\R^n$. In particular, for all $u\in S^{n-1}$, 
\begin{equation*}
\sum_{k=1}^{2(n+1)}  (u\cdot x_k)^2 =2(1+1/n).
\end{equation*}
This identity, together with $\sum_{i=1}^{2(n+1)}x_i=0$, characterizes the spherical designs of strength $2$.  (See~\cite[Theorem~2.2]{Bannai2009ASpheres}, but note that in property (6) of this theorem the first appearance of the exponent~$k$ is wrong and should be~$2k$.)
\end{proof}

\subsection{Relation to orthogonal matrices}

The union of two vertex-disjoint regular $n$-simplices, called a double regular
$n$-simplex, is an example of a maximum obtuse almost-equiangular set.  A
natural question is whether this construction is unique.  The affirmative answer
for~$n \leq 5$ is established in Theorem~\ref{thm:double-simplex-uniqueness}.
Theorem~\ref{thm:O-matrix} works towards this proof, and is interesting by
itself.

\begin{proof}[Proof of Theorem~\ref{thm:O-matrix}]
With the same notation as in the proof of Theorem~\ref{thm:maximal-almost-equiangular-sets}, let~$B$ denote the matrix associated to a maximum obtuse almost-equiangular set of unit vectors.  The matrix~$B$ has only two eigenvalues, namely~$\pm (1+1/n)$, and hence satisfies $B^2=(1+1/n)^2I$.  Moreover, $Be=(1+1/n)e$.  Let $O \coloneqq (1+1/n)^{-1}B$; it is clear from the properties of $B$ that~$O$ is symmetric and orthogonal and that it satisfies the conditions~(1)--(3).

Conversely, given a symmetric and orthogonal matrix~$O$ satisfying (1)--(3), let
\[
    U \coloneqq (1+1/n)O-(1/n)J+(1+1/n)I
\] 
and let $E_{\pm1}$ be the eigenspaces of $O$ associated with the two eigenvalues $\pm 1$. Both of them have dimension $n+1$ because $\trace(O)=0$ due to (2). 
The kernel of $U$ is the subspace $E_{-1}\oplus \R e$ of dimension $n+2$; its orthogonal complement is the eigenspace of $U$ associated to the eigenvalue $2(1+1/n)$. So $U$ is the Gram matrix of a set of $2(n+1)$ unit vectors in $\R^n$. Condition (3) ensures that this set is $(-1/n)$-almost equiangular.
\end{proof}

Any $t$-almost-equidistant set in~$S^{n-1}$ with~$t \leq 0$ can be lifted to an almost-orthogonal set on~$S^n$~\cite{Polyanskii2017OnII}. Since~$\alpha(n+1,0) = 2(n+1)$, every maximum obtuse almost-equidistant set gives a maximum almost-orthogonal set in this way. Since~$\overline{W_5}$ is $(5,0)$-realizable but not $(4,-1/4)$-realizable (see Section~\ref{subsec:NonRealizableGraphs}), the converse is not the case. Deaett characterized the maximum almost-orthogonal sets by a statement similar to Theorem~\ref{thm:O-matrix}; it differs only by the eigenvector condition~(1). Hence, the eigenvector condition distinguishes between those maximum almost-orthogonal sets that show this form of descent, and those that do not.

\subsection{The distance graph of a maximum obtuse almost-equiangular set}

A graph is \defi{quadrangular} if no two vertices have exactly one neighbor in common.

\begin{lemma}\label{lem:props-maximum-sets}
The following properties hold for the distance graph $G$ of a maximum obtuse almost-equiangular subset~$S$ of $S^{n-1}$.
\begin{enumerate}
\item If $G$ contains a $\comp{n+1}$, then~$S$ is a double regular $n$-simplex.

\item The graph~$\overline{G}$ is quadrangular.

\item The degree of a vertex in~$\overline{G}$ lies between $1$ and $n+1$.  
 If there is a vertex of degree~$n+1$ in~$\overline{G}$, then~$S$ is a double regular $n$-simplex.
If there is a vertex~$x$ with exactly one neighbor~$y$ in~$\overline{G}$, then~$G[S \setminus \{x, y\}]$ is the distance graph of a maximum obtuse almost-equiangular subset of~$S^{n-2}$.
\end{enumerate}
\end{lemma}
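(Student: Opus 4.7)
The plan is to use Theorem~\ref{thm:O-matrix} throughout, translating graph-theoretic conditions on $G$ into linear-algebraic ones on the associated symmetric orthogonal matrix $O$; by construction $O_{ij} = 0$ if and only if $i = j$ or $ij \in E(G)$, so the $\overline{G}$-neighborhood of vertex $i$ is encoded by the positions of the nonzero off-diagonal entries of the $i$-th row of $O$. For part~(1), a $K_{n+1}$ clique in $G$ forces an $(n+1) \times (n+1)$ principal submatrix of $O$ to vanish, so after relabeling $O = \smallpmatrix{0 & A \\ A^\tp & C}$. Expanding $O^2 = I$ yields $A A^\tp = I$ (so $A$ is orthogonal), $AC = 0$ (hence $C = 0$ by invertibility of $A$), and $A^\tp A + C^2 = I$ (automatic). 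The vanishing of $C$ means the remaining $n+1$ vertices also form a $K_{n+1}$ in $G$, so $S$ is a double regular $n$-simplex.

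For part~(2), the almost-equiangular property makes $\overline{G}$ triangle-free, so adjacent vertices share no common $\overline{G}$-neighbor. For non-adjacent $u \neq v$ (so $O_{uv} = 0$), the relation $(O^2)_{uv} = 0$ reads $\sum_{k \neq u,v} O_{uk} O_{kv} = 0$, and each nonzero summand corresponds to a common $\overline{G}$-neighbor; a single nonzero summand cannot sum to zero, so the number of common $\overline{G}$-neighbors is never exactly one. For part~(3), the lower bound follows because $O$ is orthogonal with zero diagonal, so no row can be entirely zero, forcing at least one nonzero off-diagonal entry per row. For the upper bound, the $\overline{G}$-neighborhood of any vertex is independent in $\overline{G}$ (triangle-freeness), hence a clique in $G$ realizing a regular simplex at inner product $-1/n$ in $S^{n-1}$; the criterion in Section~\ref{sec:simplex} allows at most $n+1$ vertices in such a simplex. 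If the bound $n+1$ is attained, $G$ contains $K_{n+1}$ and part~(1) applies.

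The last claim concerns a vertex $x$ of $\overline{G}$-degree exactly one, with unique neighbor $y$. Then $x \cdot z = -1/n$ for every $z \in S \setminus \{x, y\}$. Taking the inner product of $x$ with the barycenter identity $\sum_{u \in S} u = 0$ (Theorem~\ref{thm:maximal-almost-equiangular-sets}) gives $1 + x \cdot y + (2n)(-1/n) = 0$, so $x \cdot y = 1$, forcing $x = y$ and contradicting the distinctness of $x$ and $y$. The hypothesis is therefore vacuous and the conclusion holds trivially. The main obstacle across the three parts is the clean block-matrix manipulation in part~(1); the remaining arguments are short applications of $O^2 = I$, triangle-freeness, and the simplex realizability bound.
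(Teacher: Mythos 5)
Your proofs of parts (1) and (2) and of the upper degree bound in (3) are correct and essentially identical to the paper's: the same block computation $AA^\tp = I$, $AC=0 \Rightarrow C=0$ for (1), the same orthogonality-of-columns/$O^2=I$ argument for quadrangularity, and the same triangle-freeness-plus-simplex argument bounding the $\overline{G}$-degree by $n+1$ and reducing the extremal case to (1). Where you diverge is in the rest of (3). For the lower bound you use that each row of $O$ has unit norm and zero diagonal, which is shorter than the paper's route; the paper instead projects the $G$-neighborhood $N_x$ onto the affine hyperplane $\{z : x\cdot z = -1/n\}$, rescales it to $S^{n-2}$ (pairs at inner product $-1/n$ become pairs at $-1/(n-1)$), and applies Theorem~\ref{thm:maximal-almost-equiangular-sets} in dimension $n-1$ to get $|N_x|\leq 2n$. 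That projection is exactly how the paper then proves the final "descent" clause: if the $\overline{G}$-degree is $1$, the projected $N_x$ has $2n$ points and is therefore a maximum obtuse almost-equiangular subset of $S^{n-2}$. Your treatment of that clause is entirely different: you show via the barycenter identity $\sum_{u\in S}u=0$ that a degree-one vertex would force $x\cdot y=1$, so the hypothesis never occurs and the conditional is vacuously true. I checked this computation and it is correct ($1 + x\cdot y + 2n(-1/n)=0$ gives $x\cdot y=1$, impossible for distinct unit vectors), so your argument is valid and in fact yields the stronger statement that the minimum degree of $\overline{G}$ is at least $2$; the subsequent application of the lemma in the proof of Theorem~\ref{thm:double-simplex-uniqueness} (ruling out a vertex $\overline{G}$-adjacent only to $y$) still goes through, indeed more directly. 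What you lose relative to the paper is the structural content of the descent statement — the explicit correspondence between such configurations and maximum sets one dimension down — but as a proof of the lemma as stated your route is sound.
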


\begin{proof}
Let $O=(n / (n+1))U+(1 / (n+1))J-I$, where~$U$ is the Gram matrix of~$S$, be the matrix of Theorem~\ref{thm:O-matrix}.  The entries of~$O$ are equal to~$0$ on the diagonal and at pairs of vectors with inner product~$-1/n$,  so the adjacency matrix~$A$ of~$\overline{G}$ is such that~$A_{ij} = 0$ if~$O_{ij} = 0$ and~$A_{ij} = 1$ if~$O_{ij} \neq 0$.

If~$G$ contains a~$K_{n+1}$, then~$O$ is of the form~$O = \smallpmatrix{ 0& B\\B^\tp & D}$ where~$B$ and~$D$ are $(n+1)\times (n+1)$ matrices.  The condition~$O^2=I$ leads to~$BB^\tp=I$ and~$BD=0$. But then~$B$ is invertible and so~$D=0$, which proves~(1).

Property~(2) follows from the columns of~$O$ being pairwise orthogonal: if two vertices~$x_i$, $x_j$ share a single neighbor~$x_k$ in~$\overline{G}$,  then~$O_{ki}O_{kj} \neq 0$,  while~$O_{li}O_{lj} = 0$ for~$l \neq k$.  But then the columns~$i$ and~$j$ of~$O$ would not be orthogonal.

To prove~(3), note that~$\overline{G}$ is triangle free.  Let~$x$ be a vertex and let~$\overline{N}_x$ denote its set of neighbors in~$\overline{G}$.  Two vertices in~$\overline{N}_x$ cannot be adjacent in~$\overline{G}$, otherwise they would form a triangle with~$x$.  So~$\overline{N}_x$ is a clique in~$G$,  that is, it is a regular simplex,  which proves that the degree of~$x$ in~$\overline{G}$ is at most~$n+1$.  Moreover, if~$x$ has degree~$n+1$, then it follows from~(1) that~$G$ contains a~$\comp{n+1}$, and hence that~$S$ is a double regular $n$-simplex.

Next, given a vertex~$x$, let~$N_x$ be its neighborhood in~$G$.  All vertices in~$N_x$ have inner product~$-1/n$ with~$x$, and so lie in an affine hyperplane, and hence belong to an $(n-2)$-sphere~$C$.  By scaling and translating~$C$ via an affine transformation, it can be mapped to~$S^{n-2}$, and then~$N_x$ is mapped to a $t$-almost-equidistant set for some~$t \leq 0$.  It then follows from Theorem~\ref{thm:maximal-almost-equiangular-sets} that~$|N_x| \leq 2n$, and so the degree of~$x$ in~$\overline{G}$ is at least~$1$.  Moreover, if~$|N_x| = 2n$, then~$t = -1/(n-1)$.
\end{proof}

\subsection{Uniqueness of the double regular simplex.}

The goal in this section is to prove Theorem~\ref{thm:double-simplex-uniqueness}. For a given dimension~$n$, the theorem is false if there is an $(n, -1/n)$-realizable anti-triangle-free graph of order~$2(n+1)$ whose complement is not bipartite.  It turns out that, to prove the theorem, it is enough to show that such a graph whose complement contains a $5$-cycle is not realizable.

\begin{proof}[Proof of Theorem~\ref{thm:double-simplex-uniqueness}]
The distance graph of a maximum obtuse almost-equiangu\-lar set is anti-triangle free and, by Lemma~\ref{lem:props-maximum-sets}, has a quadrangular complement.  Moreover, if the set is not a double regular $n$-simplex, then the complement is not bipartite.  The goal of the proof is then to show that, if~$G$ is an anti-triangle free graph of order~$2(n+1)$ whose complement is quadrangular and nonbipartite, then~$G$ is not $(n, -1/n)$-realizable.  This is done below for~$2 \leq n \leq 5$.

Let~$G$ be an anti-triangle-free graph of order~$2(n+1)$ whose complement is quadrangular.  Say that~$\overline{G}$ does not contain odd cycles of length~$3$, $5$, \dots,~$2k-1$, but contains an odd cycle of length~$2k + 1$ with vertices~$p_0$, \dots,~$p_{2k}$.  Since~$\overline{G}$ is quadrangular, every pair of vertices~$p_i$, $p_{i+2}$, with indices taken modulo~$2k+1$, has at least two common neighbors.  One of the neighbors is~$p_{i+1}$; denote the other by~$q_{i+1}$.  Since~$\overline{G}$ does not contain odd cycles of length less than~$2k+1$, the vertices~$p_i$ and~$q_i$ must all be distinct, and so the order of~$G$ is at least~$2(2k+1)$, whence~$n \geq 2k$. This settles the case~$n = 3$.
\medbreak

\noindent
{\sc Dimensions~$4$ and~$5$.} It follows that, for~$n \leq 5$, if~$G$ is an anti-triangle-free graph of order~$2(n+1)$ whose complement is quadrangular and nonbipartite, then~$\overline{G}$ has an odd cycle of length~$5$, and since~$n \geq 2k$ as shown above, it is necessary that~$n \geq 4$.  So it suffices to show that such a graph~$G$ for~$n = 4$ and~$5$ is not $(n, -1/n)$-realizable.

\begin{figure}
\includegraphics{./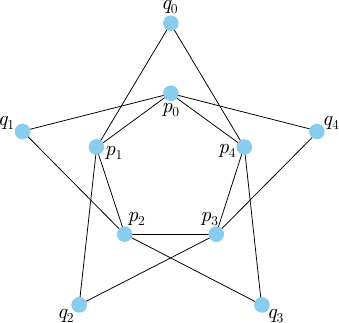}
\caption{If $\overline{G}$ contains a $5$-cycle,  it contains this subgraph.}
\label{fig:star}
\end{figure}

To this end, note that if~$p_0$, \dots,~$p_4$ is a $5$-cycle in~$\overline{G}$ and if~$q_0$, \dots,~$q_4$ are the common neighbors defined above, then~$\overline{G}$ has the graph in Figure~\ref{fig:star} as a subgraph.  Again since~$\overline{G}$ is quadrangular,  the pairs $p_i$, $q_{i+2}$ must have another common neighbor besides $p_{i+1}$.  If $n=4$,  there are no other vertices available,  so the only possibility is that~$q_0$, \dots,~$q_4$ is a cycle, that is,~$\overline{G}$ is isomorphic to~$W_5$ (see Figure~\ref{fig:W5}).  The graph $\overline{W_5}$ is not $(4,-1/4)$-realizable (see Section~\ref{subsec:NonRealizableGraphs}), so the proof is finished for~$n=4$.

The remaining case is~$n = 5$, for which~$G$ has order~$12$.  Call $x$, $y$ the two vertices of~$G$ other than the~$p_i$ and~$q_i$.   By an argument similar to the one above,~$\overline{G}$ contains as a subgraph either~$W_5$, as was the case for~$n = 4$, or, without loss of generality, the graph in Figure~\ref{fig:W5alt}.
\medbreak

\begin{figure}
\includegraphics{./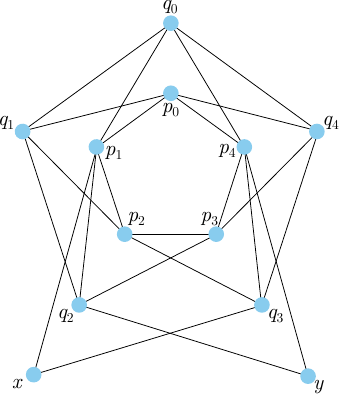}
\caption{When $n=5$,  the graph~$\overline{G}$ may contain this graph as a subgraph.}
\label{fig:W5alt}
\end{figure}

\noindent
\textsl{Dimension~$5$ and~$\overline{G}$ contains the graph of Figure~\ref{fig:W5alt}.} If $\overline{G}$ contains the graph of Figure~\ref{fig:W5alt}, then since~$\overline{G}$ is triangle free and~$x$ is adjacent to~$p_1$ and~$q_3$ in~$\overline{G}$, it must be that~$x$ is adjacent to~$p_0$, $q_0$, $p_2$, $q_2$, $p_4$, and~$q_4$ in~$G$.  The same reasoning for~$y$ shows that~$G$ contains as a subgraph the graph~$H_{12}$ from Figure~\ref{fig:H12}.  It will turn out that~$H_{12}$ is not $(5, -1/5)$-realizable.
\medbreak

\begin{figure}
\includegraphics{./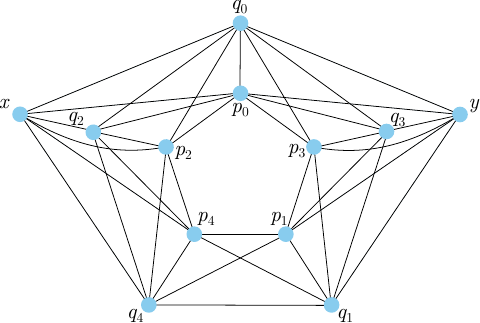}
\caption{The graph $H_{12}$ is not $(5,-1/5)$-realizable. }
\label{fig:H12}
\end{figure}

\noindent
\textsl{Dimension~$5$ and~$\overline{G}$ contains $W_5$.} If~$\overline{G}$ contains~$W_5$, then the graph~$G$ contains a subgraph isomorphic to~$H_{12}$ as well.  Indeed, in this case, the vertices~$x$ and~$y$ must be adjacent in~$\overline{G}$ to the subgraph~$W_5$, otherwise by~(3) of Lemma~\ref{lem:props-maximum-sets} the graph~$\overline{W_5}$ would be the distance graph of a $(-1/4)$-almost-equiangular set in~$S^3$ with~$10$ points that is not a regular double simplex, a contradiction.

If~$v$ is a vertex of~$W_5$ in~$\overline{G}$, then the neighborhood of~$v$ in~$W_5$ is an independent set, since~$\overline{G}$ is triangle free. The neighborhood forms a clique in~$G$; call it~$C_v$. If~$x$ is adjacent to~$v$ in~$\overline{G}$, again since~$\overline{G}$ is triangle free,~$x$ is adjacent to all vertices of~$C_v$ in~$G$.

Since~$x$ is adjacent in $\overline{G}$ to at least one vertex~$v$ of $W_5$, without loss of generality say~$x$ is adjacent to~$p_1$. Then,~$x$ is adjacent in~$G$ to~$C_{p_1} = \{p_0, q_0, p_2, q_2\}$. But then without loss of generality~$x$ is adjacent in~$G$ to~$\{p_0,q_0,p_2,q_2,p_4,q_4\}$.  Namely, if~$x$ is not adjacent to any of~$p_3$, $q_3$, $p_4$, and~$q_4$ in~$\overline{G}$, the statement follows immediately. Otherwise, if~$x$ is adjacent, say, to~$p_3$ in~$\overline{G}$, then~$x$ is adjacent in~$G$ to~$C_{p_1} \cup C_{p_3} = \{p_0, q_0, p_2, q_2, p_4, q_4\}$.

It remains to show that~$y$ is adjacent in~$G$ to all vertices in~$\{p_1,q_1,p_3,q_3\}$; applying the previous reasoning to~$y$ shows that if this is the case,~$y$ is adjacent to all vertices in either~$\{p_4,q_4,p_1,q_1,p_3,q_3\}$ or~$\{p_1,q_1,p_3,q_3, p_0,q_0\}$,  meaning that a subgraph isomorphic to~$H_{12}$ occurs in~$G$.

To prove that~$y$ is adjacent to all vertices in~$\{p_1, q_1, p_3, q_3\}$, consider the following. In order to arrive at a contradiction, assume~$y$ is adjacent to $p_1$ in~$\overline{G}$, again without loss of generality. Then~$y$ is connected in~$G$ to~$C_{p_1} = \{p_0,q_0,p_2,q_2\}$.  But~$x$ is also adjacent in~$G$ to these vertices,  and if~$G$ contains a~$\comp{6}$, then it is not $(5,-1/5)$-realizable ((1) of Lemma~\ref{lem:props-maximum-sets}), so $\{x, y\}$ is independent in~$G$.  Now the contradiction comes from the quadrangularity of~$\overline{G}$; indeed, if~$x$ and~$y$ are not adjacent in~$G$, then~$y$ is a common neighbor of~$x$ and~$p_1$ in~$\overline{G}$. But it is not possible that~$x$ and~$p_1$ have a second common neighbor because $x$ is not connected to any neighbor of~$p_1$ in~$\overline{G}$ other than~$y$.

To complete the proof, it remains to show that the graph $H_{12}$ is not $(5,-1/5)$-realizable. This is a specialization of a part of the proof of the nonrealizability of~$W_k$ from Section~\ref{subsec:NonRealizableGraphs}. In fact, the graph~$H_{12}$ is a subgraph of~$\overline{W_7}$, with two vertices and some edges removed. The removed edges play no role in the proof, and the two vertices only play a role for nonrealizability for~$n=6$, but for~$n=5$ they are superfluous.
\end{proof}
 
\section{Classification in dimensions 2 and 3}
\label{sec:low-dim}

Section~\ref{sec:realizability} gives exact conditions on the dimension~$n$ and inner product~$t$ for which simplices, rhombi, and spindles are $(n, t)$-realizable. For every integer~$m \geq 1$, dimension~$n$, and inner product~$t$, this gives sufficient conditions for the existence of $(n, t)$-realizable anti-triangle-free graphs of order~$m$.  These realizable graphs then give $t$-almost-equiangular sets of cardinality~$m$ in dimension~$n$.  In this section a converse result is obtained in low dimension: list all maximum-cardinality, almost-equiangular sets in~$S^{n-1}$, with~$n \leq 3$.

Say that an anti-triangle-free graph is \defi{minimal} if the removal of any edge results in a graph that is not anti-triangle free.  Given~$n$ and~$t$, say that an anti-triangle-free graph is \defi{$(n, t)$-optimal} if it is $(n, t)$-realizable and if it has order~$\alpha(n, t)$.  If a graph is the unique minimal $(n, t)$-optimal graph up to isomorphism, then it is called a \defi{unique optimal construction}.

Finding all minimal $(n, t)$-optimal graphs for low dimension~$n$ is done by performing a graph search. The results from Section~\ref{sec:realizability} provide the conditions for this search. They also give lower bounds on~$\alpha(n, t)$. Theorem~\ref{thm:interpolation} and Theorem~\ref{thm:maximal-almost-equiangular-sets} provide an upper bound of~$\alpha(n, t) \leq 2(n+1)$  for~$t \in [-1, 0]$, which is only attained at~$t = -1/n$. There is a global lower bound of~$\alpha(n, t) \geq 4$, given by the disjoint union of two edges.

Perform the graph search as follows. Let~$t_{k,i}$ be the~$i$th root of the polynomial~\eqref{eq:MoserSpindlePolynomial} for fixed~$k$. Given~$t$ and~$2 \leq k \leq n$, list all graphs~$G$ of a given order that do not contain a subgraph isomorphic to:
\begin{itemize}
    \item an anti-triangle;
    \item $\comp{n + 2}$;
    \item an~$n$-rhombus;
    \item a~$k$-rhombus if~$t \leq -1/k$;
    \item $\comp{k + 1}$ if~$t < -1/k$;
    \item $\comp{n + 1}$ if~$t > -1 / n$;
    \item an extended $(n - 1)$-rhombus if~$t = -1/n$;
    \item $\MS{k}$ if~$t < t_{k, 1}$;
    \item $\MS{n - 1}$ if~$t > t_{n - 1, 2}$.
\end{itemize}

The search is implemented in SageMath in a script in the supplement to this paper. The code is a modified version of the code used in~\cite{Balko2020Almost-EquidistantSets}. Given a dimension~$n$, all anti-triangle free graphs of cardinality at most~$2(n+1)$ not containing a~$K_{n+2}$ are generated. A second script reduces the size of these sets greatly by only taking the minimal anti-triangle-free graphs. Finally, each graph is searched for the above list of subgraphs.  The results below are summarized in Figure~\ref{fig:dim2and3}.
\medbreak

\noindent
{\sc Dimension~$2$.}
The~$3$-point bound for~$t\leq 0$ proves a global upper bound~$\alpha(2, t) \leq 6$, which is only achieved at~$t = -1 / 2$ by the double triangle. A graph search on order~$6$ graphs that are anti-triangle-free and do not contain~$\comp{4}$ or a~$2$-rhombus shows that this is the only minimal order~6 construction on the circle.

The Moser Spindle~$\MS{1}$ is realizable for~$t = -(1 / 4)(1 \pm \sqrt{5})$ and has order~5. Its graph is a~5-cycle, which is the unique anti-triangle-free graph of order~5 containing no~$\comp{3}$. The inner product~$t = -(1/4)(1 + \sqrt{5}) = \cos(-4\pi / 5)$ corresponds to the pentagram and the inner product~$t = -(1/4)(1-\sqrt{5}) = \cos(-2\pi / 5)$ corresponds to the regular pentagon.

Every other anti-triangle-free graph of order~$5$ satisfying the constraints above contains the disjoint union of a triangle and an edge, which is only realizable at~$t = -1 / 2$. This shows that the~5-cycle is the unique optimal construction at~$t = -(1 / 4)(1 \pm \sqrt{5})$.

At every other inner product the maximum cardinality is~$4$, attained by two disjoint edges, which is the unique optimal construction of this order.
\medbreak

\noindent
{\sc Dimension~$3$.}
The~$3$-point bound for~$t \leq 0$ proves a global upper bound~$\alpha(3, t) \leq 8$. This is achieved by the double tetrahedron for~$t = -1 / 3$. The graph search shows that the double tetrahedron is the only minimal construction of order~$8$.

In the region~$t_{2,1} \leq t \leq t_{2,2}$, the Moser spindle~$\MS{2}$ is~$(3, t)$-realizable and of order~7. Excluding this subgraph from the graph search shows that it is the unique optimal construction for~$t_{2,1} \leq t < -1/3$ and~$-1/3 < t \leq t_{2,2}$.

For~$t \geq -1/2$, the double triangle is realizable. For~$t = -1/2$, it is the unique optimal construction. For~$t > -1/2$, the spindle~$\spindle{1}{2}$ is realizable and of order~6. Excluding these subgraphs from the graph search shows there are no other $(3, t)$-realizable graphs of order~$6$ with~$t > -1/2$. So for~$-1 / 2 < t < t_{2,1}$ and~$t_{2,2} < t \leq 0$, there are two optimal constructions of order~6.

The Moser spindle~$\MS{1}$ is the unique optimal construction for all~$-(1/4)(1 - \sqrt{5}) \leq t < -1/2$ and has order~$5$, as described above.


\section*{Acknowledgments}

We thank Nando Leijenhorst for help with the \texttt{ClusteredLowRankSolver.jl}
package and Willem de Muinck Keizer for helpful discussions.  Alexey Glazyrin
has pointed us to references~\cite{Bilyk2024OptimizersSets,
Bilyk2023OptimalPotentials}.


\newcommand{\arXiv}[1]{arXiv:#1}
\newcommand{\doi}[1]{}

\bibliographystyle{amsinit}
\bibliography{ref3.bib}




\end{document}